\documentclass[reqno,11pt]{amsart}
\usepackage{latexsym,amssymb,amsfonts,amsmath,amsthm,mathtools,bbm}
\usepackage[usenames]{xcolor}
\usepackage{eucal}
\usepackage[colorlinks,citecolor=blue,urlcolor=blue]{hyperref}
\usepackage{graphicx}
\newtheorem{thm}{Theorem}[section]

\newtheorem{lem}[thm]{Lemma}

\newtheorem{defn}[thm]{Definition}

\theoremstyle{remark}

\numberwithin{equation}{section}
\newcommand{\Real}{\mathbb R}
\newcommand{\eps}{\varepsilon}

\newcommand{\F}{\mathcal{F}}

\newcommand{\one}[1]{\mathbf{1}_{\{#1\}}}

\renewcommand{\P}{\mathbb{P}}

\newcommand{\E}{\mathbb{E}}

\allowdisplaybreaks

\begin{document}

\title[Multitype PCR branching processes]{Multitype PCR branching processes}

\author{P. Chigansky}
\address{Department of Statistics and Data Science,
The Hebrew University of Jerusalem, Mount Scopus, 
Israel}
\email{pavel.chigansky@mail.huji.ac.il}

\author{F. Klebaner}%
\address{School of Mathematics, Monash University, Clayton, Vic. 3058, Australia}
\email{fima.klebaner@monash.edu} 

\author{M. Mrksa}
\address{School of Mathematics, Monash University, Clayton, Vic. 3058, Australia}
\email{marko.mrksa@monash.edu} 

\author{S. Sagitov}
\address{Mathematical Sciences, Chalmers University of Technology, Gothenburg, Sweden}
\email{serik@chalmers.se} 

\date{\today}
\begin{abstract}
To model amplification Polymerase Chain Reaction (PCR) techniques targeting DNA sequences of several types, we introduce a multitype PCR branching process as a generalized version of the Michaelis--Menten–based branching process model introduced in \cite{JagKlePCR}. 
We establish two limit theorems extending the results of \cite{Fima17PCR} to the multitype case.
\end{abstract}
\maketitle

\section{Introduction}

Polymerase Chain Reaction (PCR) is a laboratory technique used to amplify specific DNA sequences, producing millions of copies from a small initial sample. In practice, it is widely used in forensic science, cancer diagnostics, detection of infectious diseases, genetic testing, evolutionary biology, and molecular cloning, among many other applications in biomedical research.

A mathematical PCR amplification model introduced in \cite{JagKlePCR} describes the number of target DNA sequences $Z(n)$ after $n$ PCR cycles as a population size--dependent branching process. Given $Z(n-1)=z$, each of the $z$ sequences independently replicates successfully with probability
\[
p(v,K,z)=\frac{vK}{K+z},
\]
where $v\in(0,1]$ and $K>0$, and fails to replicate otherwise. If $z'$ denotes the number of successful replications, then
\[
Z(n)=z+z'.
\]

We refer to this model as a single-type PCR branching process with parameters $(K,v)$. Here $K$ is the so-called Michaelis--Menten constant. Its exact value, typically assumed to be large, is difficult to specify, as it depends on experimental conditions and PCR reaction chemistry. A natural object for mathematical study, the density process $X(n)=K^{-1}Z(n)$, was analyzed in \cite{Fima17PCR}.

The boundary case $K=\infty$, for which $p(v,K,z)=v$, reduces the model to a Galton--Watson process $Y(n)$ with mean offspring number $b=1+v$. By the classical theory of Galton--Watson processes, the martingale $b^{-n}Y(n)$ has an almost sure limit $W$ with
\[
\E W = Y(0),
\]
see \cite{AN,Stochalc}. The Galton--Watson process $Y(n)$ with $Y(0)=Z(0)$ describes the early trajectory of $Z(n)$. As shown in \cite{Fima17PCR}, an informative asymptotic formula for $X(\kappa)$ is available for the specific time $\kappa$ at which the two normalization factors coincide,
\[
b^\kappa=K.
\]
Namely (in this paper all asymptotic relations, unless explicitly stated otherwise, assume that $K\to\infty$),
\[
X(\kappa)\xrightarrow{\P} H(W),
\]
where $H$ is a certain function uniquely determined by the parameter $v$.

In modern cancer diagnostics \cite{St}, PCR analysis is often performed simultaneously on DNA sequences of several types, which leads naturally to the concept of multitype PCR branching processes described in Section~\ref{BP}. Section~\ref{MR} presents two limit theorems for such processes at times around the pivotal number $\kappa$, in the spirit of \cite{Fima17PCR}.

Since the pivot time $\kappa$ is a logarithmic function of $K$, it is important to verify the theoretical asymptotic formulas using computer simulations. In Section~\ref{Ill} we present two illustrations of our limit theorems based on such simulations. The main illustration addresses the setting of \cite{LS}, demonstrating that our asymptotic results may partly accelerate the ABC procedure used there. 
Section~\ref{Prf} contains the proofs.

\section{Multitype PCR branching process}\label{BP}
In this paper, $\bar{x}$ denotes the vector $(x_1,\ldots,x_d) \in \mathbb{R}^d$,
regardless of the underlying symbol $x$. 
We write $x = x_1 + \cdots + x_d$ for the sum of all components of $\bar{x}$ and $x_0 = x_1 + \cdots + x_{d_0}$ for the sum of the first $d_0$ components of $\bar{x}$.
In particular, 
\[\bar{Z}(n) = (Z_1(n),\ldots,Z_d(n)),\quad Z(n) = Z_1(n) + \cdots + Z_d(n),\quad Z_0(n) = Z_1(n) + \cdots + Z_{d_0}(n).\]

\begin{defn}\label{def}
 For given $K>0$, vector $\bar v$ such that for some $1\le d_0\le d$, 
\begin{equation}\label{viq}
1\ge v_1=\ldots=v_{d_0}>v_{d_0+1}\ge \ldots\ge v_d>0,
\end{equation}
and initial state
 $\bar  Z(0)\in\mathbb N^d$,
 define a Markov chain  
 $\bar  Z(0), \bar  Z(1), \bar  Z(2), \ldots$,
 by the recursion
\begin{equation}\label{Zn}
Z_i(n)=Z_i(n-1)+\sum_{j=1}^{Z_i(n-1)}\xi_{j}\big(v_i,K,Z(n-1)\big), \quad n\in \mathbb N,\quad i=1,\ldots, d,
\end{equation}
where the summands, conditioned on $Z(n-1)=z$, are independent Bernoulli random variables with success probabilities
\begin{equation}\label{mx}
\P(\xi_{j}(v_i,K,z)=1)=\frac{v_iK}{K+z}.
\end{equation}
Such a Markov chain $\bar  Z(n)$ will be called a multitype PCR (MPCR) branching process with parameters $(K,\bar v)$. 
\end{defn}

Our interest lies in approximating the density process $\bar  X(n)=\frac{1}{K}\bar  Z(n)$ for large values of $K$. By dividing \eqref{Zn} by $K$, a recursive equation for the density is obtained: 
\begin{equation}\label{stochdyn}
\bar  X(n) = \bar  F(\bar  X(n-1)) + \frac 1 {\sqrt K} \bar  \eps(n), \quad n\in \mathbb N, 
\end{equation}
where the vector $ \bar  F(\bar x)$ has components
\begin{equation}\label{barF}
F_i(\bar  x)=x_i+\frac{v_ix_i}{1+x}, \quad i=1,\ldots,d.
\end{equation}
 
The components of the random vectors  $\bar  \eps(n)$ are given by 
\begin{equation}\label{epsn}
\eps_i(n) = \frac{1}{\sqrt K}\sum_{j=1}^{Z_i(n-1)} \Big(\xi_{j}(b_i,K, Z(n-1)) -\frac{v_iK}{K+Z(n-1)}\Big).
\end{equation}
Since by Definition \ref{def}, the summands in \eqref{epsn} have zero means, we have 
 $\eps_i(n) =O_{\P}(1)$ for  any fixed $n$. Thus, if 
$
\frac{1}{K}\bar  Z(0)\xrightarrow{\P} \bar  x,
$
representation \eqref{stochdyn} yields the well known limit for density dependent branching processes, see e.g. \cite{Klethr},
\begin{equation}\label{lln}
\bar   X(n)\xrightarrow{\P} \bar  F^{(n)}(\bar  x),\quad \forall n\in \mathbb N,
\end{equation}
where $\bar  F^{(n)}$ is the $n$-th iterate of $\bar  F$. 

The limit in \eqref{lln}  provides a useful approximation unless $\bar x = \bar 0$. If, however, $\bar Z(0)$ is fixed, so that $\bar x = \bar 0$, it ceases to be informative because $\bar F^{(n)}(\bar 0) = \bar 0$ for any fixed $n$ (since zero is a fixed point of $\bar F$). Nevertheless, a non-trivial limit can still be obtained if the trajectory is observed at times that grow with $K$. The suitable time scale turns out to be logarithmic in $K$, and in this case, unlike in \eqref{lln}, the corresponding limit is random.

\section{Main results}\label{MR}
To formulate our main results concerning the MPCR branching process with parameters $(K,\bar v)$, Theorems \ref{mainthm} and  \ref{thm} below, we introduce the following objects. 

\medskip

\begin{enumerate}

\item  For the MPCR branching process $\bar Z(n)$, controlled by $K$, we consider a coupled vector $\bar Y(n)$ of $d$ independent Galton--Watson processes, constructed in the following way.
Let $U_{i,j,n}\sim U([0,1])$ be i.i.d. uniform random variables, define coupled Bernoulli random values
\begin{equation}\label{xieta}
\xi_{j} (v_i,K,Z(n-1))=\mathbf{1}\left\{U_{i,j,n} \leq  \frac{v_iK}{ K+Z(n-1)}\right\},\quad    
\eta_{i,j,n} = \mathbf{1}\big\{U_{i,j,n} \leq v_i\big\},
\end{equation}
and set recursively 
\begin{equation}
\label{Yn}
Y_i(n)=Y_i(n-1) + \sum_{j=1}^{Y_i(n-1)} \eta_{i,j,n}.
\end{equation}
Since $\frac{v_iK}{ K+Z(n-1)}\le v_i$, the coupled processes defined by \eqref{Zn} and \eqref{Yn} satisfy 
\begin{equation}\label{ZleY}
Z_i(n)\le Y_i(n), \quad  i=1,\ldots,d,\quad n\in\mathbb N.
\end{equation}
Furthermore, the limits
\begin{equation}\label{branchlim}
W_i := \lim_{n\to\infty} b_i^{-n} Y_i(n), \quad b_i = 1 + v_i, \quad i = 1,\ldots,d,
\end{equation}
exist both almost surely and in $\mathbb{L}_2$, and satisfy
\[
\E W_i = Z_i(0).
\]
\item Define a function
\begin{equation}\label{fx}
f(r) = r+\dfrac{v_1r}{1+r},\quad r\ge0,
\end{equation}
and let $f^{(n)}$  for $n\in\mathbb N$, denote its $n$-th iterate. 
Letting $f^{(0)}(r)\equiv r$
and $f^{(-1)}$ be the inverse function of $f$,  define $f^{(-n)}$ for $n\in\mathbb N$, as $n$-th iterate of $f^{(-1)}$.

As shown in \cite[Section~3]{Fima17PCR}, the limit
\begin{equation}\label{Hx}
H(r) := \lim_{n\to\infty} f^{(n)}(b_1^{-n}r), \quad r \ge0,
\end{equation}
exists and defines an increasing continuous function. It is also shown there that
\[
r - r^2 \le f^{(n)}(b_1^{-n}r) \le r, \quad r \ge0,
\]
which implies 
\begin{equation}\label{Hr}
r - r^2  \le H(r) \le r, \quad r \ge0,
\end{equation}
and therefore $H(0)=0$ and $H'(0)=1$.\\

\item 
Define, for $i=1,\ldots,d$, the functions
\begin{equation}\label{Gilim}
G_i(r)=\prod_{m=1}^{\infty}\frac{1+b_i^{-1}H(b_1^{-m}r)}{1+H(b_1^{-m}r)},
\qquad r\ge0.
\end{equation}
In the proof below we show that these infinite products converge and yield positive, continuous, decreasing functions $G_i$ satisfying
\begin{equation}\label{GH}
rG_i(r)=H(r), \qquad i=1,\ldots,d_0,
\end{equation}
see Figure~\ref{fig0}   for an illustration.

\begin{figure}
    \centering
    \includegraphics[height=5cm]{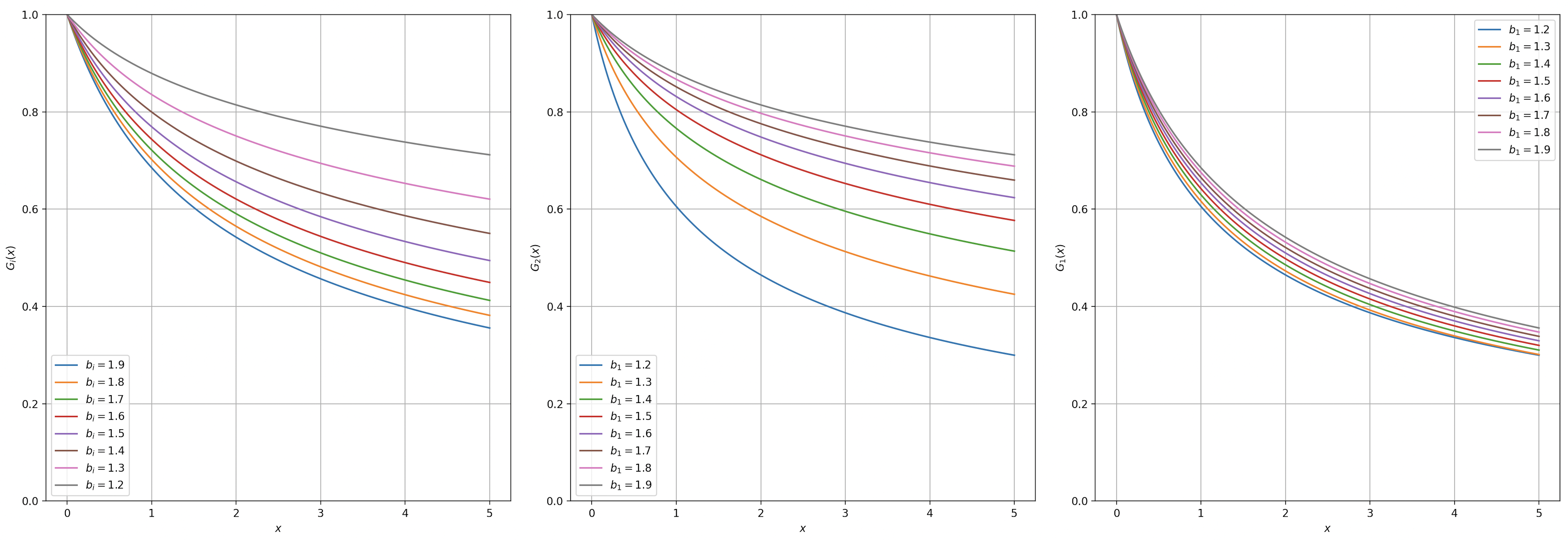}
\caption{\small Graphs of functions defined by \eqref{Gilim}. Left panel: curves $G_1(x),\ldots,G_8(x)$ for $b_1=1.9$, $b_2=1.8$, $b_3=1.7$, $b_4=1.6$, $b_5=1.5$, $b_6=1.4$, $b_7=1.3$, $b_8=1.2$. Middle panel: curves $G_2(x)$ for varying values of $b_1$ with $b_2=1.2$ fixed. Right panel: curves $G_1(x)$ for varying values of $b_1$.}    \label{fig0}
\end{figure}

%

\end{enumerate}

\medskip

\begin{thm}\label{mainthm}
Consider the MPCR branching process $\bar Z(n)$. Given that parameters  \eqref{viq} and the initial vector
$\bar  Z(0)$ are fixed, assume that parameter $K\in\mathbb (0,\infty)$ varies in such a way that 
$$K\to\infty,\quad \log_{b_1}K\in\mathbb N.$$
 Then putting $ \kappa=\log_{b_1}K$, we have
\begin{align}\label{claim2}
(b_1^{-\kappa}Z_1(\kappa), b_1^{-\kappa}Y_1(\kappa),\ldots, &b_d^{-\kappa}Z_d(\kappa), b_d^{-\kappa}Y_d(\kappa))\\
& \xrightarrow{\P}  (W_1G_1(W_0),W_1,\ldots, W_dG_d(W_0),W_d), \nonumber
\end{align}
where $W_1,\ldots,W_d$ are independent random values defined by \eqref{branchlim} and
\(W_0=W_1+\ldots+W_{d_0}.\)
\end{thm}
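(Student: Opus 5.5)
The idea is to compare $Z_i$ with its coupled Galton--Watson copy $Y_i$ through the ratio of one-step growth factors. Conditionally on the past, each type-$i$ particle of $\bar Z$ at time $m-1$ splits with probability $v_iK/(K+Z(m-1))$, whereas in $\bar Y$ it splits with probability $v_i$. The ratio of the mean growth factors is
\[
\frac{1+v_i/(1+X(m-1))}{1+v_i}=\frac{1+b_i^{-1}X(m-1)}{1+X(m-1)} .
\]
This is exactly the factor appearing in \eqref{Gilim} with $X(m-1)$ replaced by $H(b_1^{-(\kappa-m+1)}W_0)$. So the target is the heuristic
\[
b_i^{-\kappa}Z_i(\kappa)\approx b_i^{-\kappa}Y_i(\kappa)\prod_{m=1}^{\kappa}\frac{1+b_i^{-1}X(\kappa-m)}{1+X(\kappa-m)},\qquad X(\kappa-m)\approx H(b_1^{-m}W_0).
\]
We already know $b_i^{-\kappa}Y_i(\kappa)\to W_i$ a.s. by \eqref{branchlim}, so the convergence of the $Y$-coordinates in \eqref{claim2} is free. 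The work is in the $Z$-coordinates.

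I would split time at $\kappa-L$, with $L$ fixed and large, and then let $L\to\infty$ at the end.

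\emph{Early phase, $n\le\kappa-L$.} Using \eqref{xieta}, a type-$i$ particle splits in $\bar Y$ but not in $\bar Z$ with conditional probability at most $v_iZ(m-1)/K\le v_iY(m-1)/K$. By \eqref{ZleY} and a first-moment recursion, this should give
\[
\E\,b_i^{-n}\big(Y_i(n)-Z_i(n)\big)\le C\sum_{m\le n}\frac{\E Y(m)}{K}\le C' b_1^{\,n-\kappa}.
\]
The second moments of the Galton--Watson processes also control the second-order terms. Hence $b_i^{-n}(Y_i(n)-Z_i(n))=O_{\P}(b_1^{-L})$ uniformly in $n\le\kappa-L$. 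In particular:
\begin{itemize}
\item $X_0(\kappa-L)=b_1^{-L}\,b_1^{-(\kappa-L)}Z_0(\kappa-L)=b_1^{-L}W_0+o_{\P}(1)+O_{\P}(b_1^{-2L})$;
\item for $i>d_0$, $X_i(\kappa-L)\le (b_i/b_1)^{\kappa-L}b_1^{-L}\,b_i^{-(\kappa-L)}Y_i(\kappa-L)\to0$ in probability.
\end{itemize}
So the subdominant types do not affect the total density.

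\emph{Late phase, the last $L$ steps.} On $\{W_i>0\}$ the counts $Z_i$ are of order $K$. Conditional Chebyshev bounds for the binomial sums give
\[
\frac{Z_i(m)}{Z_i(m-1)}=1+\frac{v_i}{1+X(m-1)}+O_{\P}(K^{-1/2}).
\]
Summing over $i\le d_0$ yields $X_0(m)=f(X_0(m-1))+o_{\P}(1)$. Since $X-X_0$ remains negligible, continuity of $f$ gives
\[
X(\kappa-m)=f^{(L-m)}(b_1^{-L}W_0)+o_{\P}(1)\qquad\text{for } m=0,\dots,L.
\]
Multiplying the $L$ conditional growth ratios and comparing them with the $Y$-ratios, which equal $b_i+O_{\P}(K^{-1/2})$, gives
\[
\frac{b_i^{-\kappa}Z_i(\kappa)}{b_i^{-(\kappa-L)}Z_i(\kappa-L)}=\prod_{m=1}^{L}\frac{1+b_i^{-1}f^{(L-m)}(b_1^{-L}W_0)}{1+f^{(L-m)}(b_1^{-L}W_0)}+o_{\P}(1).
\]
The early-phase comparison replaces $b_i^{-(\kappa-L)}Z_i(\kappa-L)$ by $W_i$, up to errors that vanish as $K\to\infty$ and then $L\to\infty$. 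On $\{W_i=0\}$ the bound $Z_i\le Y_i$ settles everything directly.

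\emph{Limit $L\to\infty$.} I would show that $f^{(L-m)}(b_1^{-L}r)=f^{(L-m)}\big(b_1^{-(L-m)}(b_1^{-m}r)\big)\to H(b_1^{-m}r)$ for each $m$, by \eqref{Hx}. Each factor of the product lies in $(0,1]$, and $1$ minus the factor is at most a constant times $x$. The bound $f^{(n)}(b_1^{-n}r)\le r$, from the estimate stated before \eqref{Hr}, gives $f^{(L-m)}(b_1^{-L}r)\le b_1^{-m}r$, which is summable in $m$. Dominated convergence for the log of the product then shows convergence to $G_i(W_0)$, together with the convergence, positivity and continuity of \eqref{Gilim}. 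For $i\le d_0$, the identity \eqref{GH} follows because the $Z_0$-product telescopes through $f$.

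The main obstacle is the interchange of limits. The late phase is proved for fixed $L$ as $K\to\infty$, while the early-phase error and the product tail vanish only as $L\to\infty$. I would handle this with a standard $\varepsilon/3$ argument: the early-phase error is $O_{\P}(b_1^{-L})$ and the product tail is $O(b_1^{-L}W_0)$, both uniform in $K$, so choosing $L$ first and then $K$ large closes the proof.
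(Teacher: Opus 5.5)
Your argument is correct, but it takes a different route from the paper's. The paper splits at the growing time $\kappa_c=\lfloor c\kappa\rfloor$, $c\in(\frac12,1)$, and replaces $\bar X(\kappa)$ by the deterministic iterate $\bar F^{(\kappa-\kappa_c)}(\bar Y(\kappa_c)/K)$. This forces it to do four things:
\begin{itemize}
\item control noise accumulated over $\kappa-\kappa_c\to\infty$ steps, which is why $c>\frac12$ is needed;
\item make a weighted estimate via \eqref{ineq} for the slow types;
\item compare $\bar Z$ and $\bar Y$ through an auxiliary Galton--Watson process and a stopping time (Lemma~\ref{last});
\item prove scaling limits for iterates of $\bar F$ uniformly on compacts (Lemmas~\ref{lem:3.1} and \ref{lem:3.2}), handling the types $i\le d_0$ separately through $H$ and \eqref{GH}.
\end{itemize}

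You keep only a fixed window of $L$ random steps, so a finite-step law of large numbers suffices. Then $G_i$ appears directly as the limit of products of per-step growth ratios, uniformly for all types, which also explains the form of \eqref{Gilim}. The price is a double limit with no rate. The paper's route instead yields a concrete deterministic proxy with explicit error bounds, plus structural facts about $\bar F$.

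Your early-phase step is actually sharper than Lemma~\ref{last}. For $D_i=Y_i-Z_i$ one has $\E D_i(n)\le b_i\E D_i(n-1)+v_iK^{-1}\E[Z_i(n-1)Z(n-1)]$. Combined with $\E[Y_i(m)Y(m)]\le Cb_i^mb_1^m$, this gives $b_i^{-n}\E D_i(n)\le Cb_1^{n-\kappa}$. Note that the source term is this mixed second moment, not $\E Y(m)/K$ as you wrote.

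Two small corrections:
\begin{itemize}
\item For $i>d_0$ the counts $Z_i$ are of order $b_i^{\kappa}\ll K$, not $K$. So the per-step fluctuations are $O_\P(Z_i^{-1/2})$ rather than $O_\P(K^{-1/2})$. The simplest fix is to write them additively as $b_i^{-n}O_\P(\sqrt{Y_i(n-1)})=O_\P(b_i^{-n/2})$, which also makes the split on $\{W_i>0\}$ unnecessary.
\item $X(\kappa-m)$ differs from $f^{(L-m)}(b_1^{-L}W_0)$ by $o_\P(1)+O_\P(b_1^{-L-m})$, not merely $o_\P(1)$. This is harmless, since your final $\varepsilon/3$ step absorbs it.
\end{itemize}
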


\medskip

Note that by \eqref{GH},
\[W_iG_i(W_0)=\frac{W_i}{W_0}H(W_0),\quad i=1,\ldots,d_0,\]
yielding the following straightforward corollary of Theorem \ref{mainthm}: 
$$X(\kappa)\xrightarrow{\P}  H(W_0).$$

Theorem \ref{mainthm} is too narrow for application purposes as it addresses just one specific observation time $\kappa=\log_{b_1}K$. The next theorem  widens the range of times $n$ for which the approximation for $\bar Z(n)$ works. 

\begin{thm}\label{thm}
Consider the density process $\bar  X(n)$ of the MPCR branching process. Under the assumptions of Theorem \ref{mainthm}, for any fixed integer $n\in \mathbb Z$, we have
\begin{equation}\label{Xsum}
X(\kappa+n) \xrightarrow{\P}  f^{(n)}(H(W_0)),
\end{equation}
and moreover,
\begin{equation}\label{barX}
\bar  X(\kappa+n) \xrightarrow{\P}  \Big(\frac{W_1}{W_0},\ldots ,\frac{W_{d_0}}{W_0},0,\ldots,0\Big) f^{(n)}(H(W_0)).
\end{equation}
\end{thm}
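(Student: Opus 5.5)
The plan is to start from Theorem \ref{mainthm} at time $\kappa$ and then propagate the limit forward (and backward) a fixed number of steps using the deterministic recursion \eqref{stochdyn}.

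\textbf{Step 1: the limit at time $\kappa$.} By Theorem \ref{mainthm}, $X_i(\kappa)=b_1^{-\kappa}Z_i(\kappa)$. For $i\le d_0$ we have $b_i=b_1$, so $X_i(\kappa)\to W_iG_i(W_0)=\frac{W_i}{W_0}H(W_0)$ by \eqref{GH}. For $i>d_0$ we have $b_i<b_1$, so
\[
X_i(\kappa)=(b_i/b_1)^{\kappa}\, b_i^{-\kappa}Z_i(\kappa)\le (b_i/b_1)^{\kappa}\, b_i^{-\kappa}Y_i(\kappa).
\]
Since $b_i^{-\kappa}Y_i(\kappa)\to W_i$ and $(b_i/b_1)^\kappa\to0$, we get $X_i(\kappa)\xrightarrow{\P}0$. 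This proves \eqref{barX} for $n=0$, and summing the components gives \eqref{Xsum} for $n=0$.

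\textbf{Step 2: positive $n$.} Conditionally on $\bar Z(n-1)$, the noise in \eqref{epsn} has variance at most $\sum_i Z_i(n-1)/K= X(n-1)$. Hence $\frac1{\sqrt K}\bar\eps(\kappa+m)\to0$ in probability whenever $X(\kappa+m-1)$ is tight, which holds by induction. The map $\bar F$ is continuous, so the continuous mapping theorem applied inductively through \eqref{stochdyn} gives
\[
\bar X(\kappa+n)\xrightarrow{\P}\bar F^{(n)}(\bar x^*),\qquad \bar x^*=\Big(\tfrac{W_1}{W_0},\ldots,\tfrac{W_{d_0}}{W_0},0,\ldots,0\Big)H(W_0).
\]
It remains to observe two facts about $\bar F$. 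First, it preserves the set of vectors vanishing in coordinates $i>d_0$. Second, on that set, since $v_i=v_1$ for $i\le d_0$, every coordinate is multiplied by the same factor: $F_i(\bar x)=x_i\bigl(1+\frac{v_1}{1+x}\bigr)=x_i\,f(x)/x$. So proportions are preserved and the total evolves by $f$. Therefore $\bar F^{(n)}(\bar x^*)=\bigl(\tfrac{W_1}{W_0},\ldots,0\bigr)f^{(n)}(H(W_0))$.

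\textbf{Step 3: negative $n$, the main obstacle.} For $n=-m<0$ we cannot invert the stochastic recursion directly. Instead I would rerun the argument of Theorem \ref{mainthm} with $K$ replaced by the time $\kappa-m$. Concretely, $X_i(\kappa-m)=b_1^{-m}\,b_1^{-(\kappa-m)}Z_i(\kappa-m)$, and I expect the proof of Theorem \ref{mainthm} to show that $b_1^{-(\kappa-m)}Z_i(\kappa-m)\to W_i\prod_{k>m}(\cdots)$, where the product is the tail of \eqref{Gilim} with argument $H(b_1^{-k}W_0)$.

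A cleaner route avoids this: apply the already proved positive-time statement from time $\kappa-m$, forward $m$ steps. This requires tightness of $\bar X(\kappa-m)$, which holds since $X(\kappa-m)\le b_1^{-m}\sum_i b_1^{-(\kappa-m)}Y_i(\kappa-m)$. Take any subsequential limit $\bar\xi$ of $\bar X(\kappa-m)$ (jointly with $W$). Then $\bar F^{(m)}(\bar\xi)=\bar x^*$. By the same argument as Step 1, the coordinates of $\bar\xi$ for $i>d_0$ vanish. The map $f$ is strictly increasing, so $f^{(m)}$ is injective on $[0,\infty)$; together with the preserved proportions this identifies $\bar\xi=\bigl(\tfrac{W_i}{W_0}\bigr)f^{(-m)}(H(W_0))$ uniquely. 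Since the limit does not depend on the subsequence, convergence in probability follows. The delicate point is making the subsequence argument rigorous in the sense of convergence in probability jointly with $W$. I would handle it via Skorokhod-type joint weak convergence of $(\bar X(\kappa-m),\bar X(\kappa),W)$, followed by identification of the limit, which is a deterministic function of $W$.
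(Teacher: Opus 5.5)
Your Steps 1 and 2 follow the paper's argument. The paper reads off $\bar X(\kappa)\to\bar X(\infty)=\bigl(\frac{W_1}{W_0},\ldots,\frac{W_{d_0}}{W_0},0,\ldots,0\bigr)H(W_0)$ from \eqref{claim2} and \eqref{GH}, propagates it forward through \eqref{stochdyn} by continuity of $\bar F$, and evaluates the limit with Lemma~\ref{lem:iter}. Your inductive tightness is not needed: \eqref{epsbnd} bounds the conditional second moment of each $\eps_i(n)$ by $1$, so $K^{-1/2}\bar\eps(n)\to0$ at any time.

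You diverge at $n<0$, and your claim that the stochastic recursion cannot be inverted directly is mistaken; inverting it is exactly the paper's route. By Lemma~\ref{lem:inv}, $\bar F$ is a bijection of $\Real_+^d$ with continuous inverse. So \eqref{stochdyn} gives the exact identity $\bar X(\kappa-1)=\bar F^{-1}\bigl(\bar X(\kappa)-K^{-1/2}\bar\eps(\kappa)\bigr)$, and the argument lies in the domain because it equals $\bar F(\bar X(\kappa-1))\in\Real_+^d$. The continuous mapping theorem and backward iteration then give $\bar X(\kappa+n)\to\bar F^{(n)}(\bar X(\infty))$ for every $n\in\mathbb Z$, evaluated by Lemma~\ref{lem:iter} through \eqref{invinv}.

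Your subsequence argument is nevertheless sound. The ingredients are:
\begin{itemize}
\item tightness of $\bar X(\kappa-m)$;
\item vanishing of its coordinates $i>d_0$, by comparison with $\bar Y$;
\item the relation $\bar F^{(m)}(\bar\xi)=\bar X(\infty)$ for every joint subsequential limit $(\bar\xi,W)$;
\item injectivity of $f^{(m)}$ together with preservation of proportions on $\Gamma$.
\end{itemize}
Together these force $\bar\xi=g(W)$, where $g$ is continuous on $\{w_0>0\}$, a set of full mass, so convergence in probability along the whole sequence follows. Your route needs only injectivity of the scalar map $f$ on the invariant set $\Gamma$, not global invertibility of $\bar F$ (the root analysis of $\psi$ in Lemma~\ref{lem:inv}). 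The price is a compactness-identification step and an extra comparison estimate, whereas with Lemma~\ref{lem:inv} available the paper's proof is two lines.
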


\section{Illustrations of Theorems \ref{mainthm} and \ref{thm}}\label{Ill}

Figure \ref{fig1} and Figure~\ref{fign} illustrate Theorem \ref{mainthm} in the case $d=2$ with 
$v_1=0.9$ and $v_2=0.2$. We assume $Z_1(0)=Z_2(0)=1$ and $\kappa=25$. 
Note that $\kappa=25$ corresponds to the value of the Michaelis--Menten 
constant $K=(1.9)^{25}\approx 9\ 300\ 000$. 

The left panel of Figure \ref{fig1} compares $b_1^{-\kappa}Z_1(\kappa)$ with $W_1G_1(W_1)=H(W_1)$, 
while the right panel compares $b_2^{-\kappa}Z_2(\kappa)$ with $W_2G_2(W_1)$, 
cf. the left panel Figure \ref{fig0}, which shows the corresponding functions 
$G_1(x)$ and $G_2(x)$ as $G_1(x)$ and $G_8(x)$.
\begin{figure}[htbp]
    \centering
    \includegraphics[height=5.5cm]{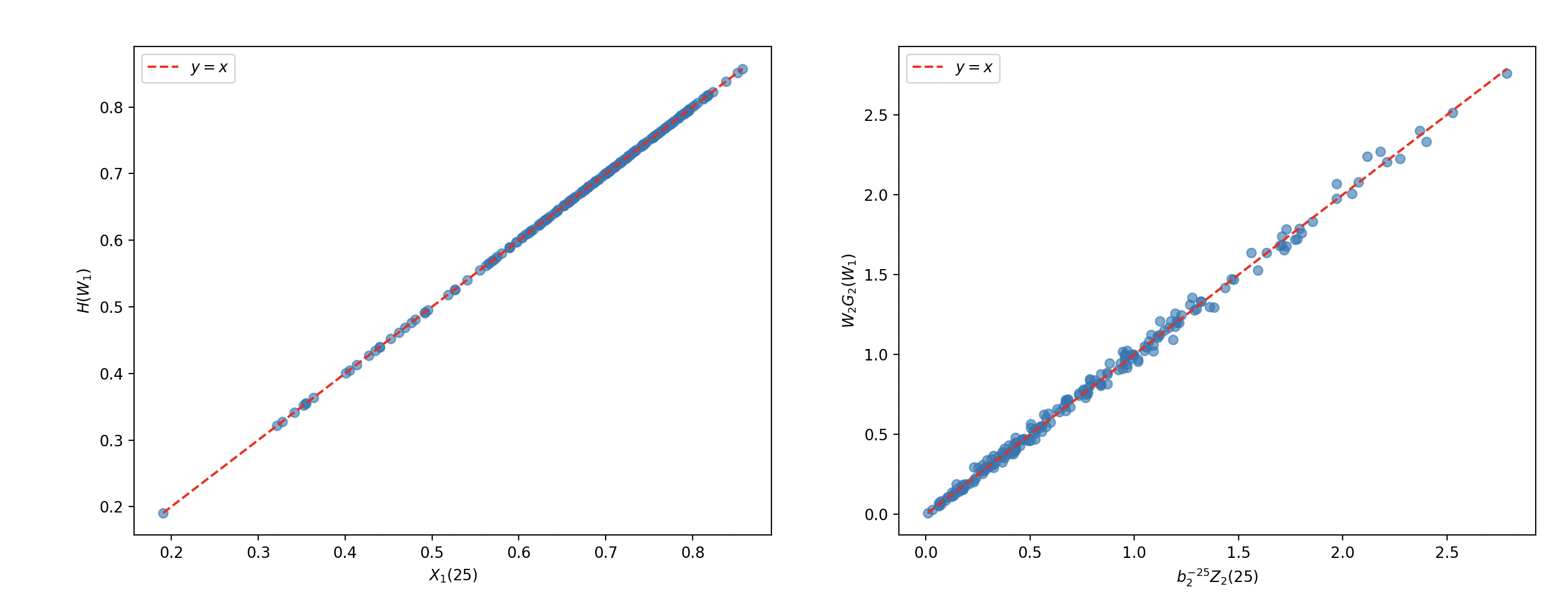}
    \caption{\small An illustration of Theorem \ref{mainthm} in the case $d=2$ with $b_1=1.9$, $b_2=1.2$ and $\kappa=25$ based on 200 simulations. }
    \label{fig1}
\end{figure}

Figure~\ref{fign} depicts the joint distribution of $H(W_1)$ and $W_2 G_2(W_1)$. The slight negative correlation reflects the competition for shared resources between the two molecular species in the PCR experiment.

Recent papers \cite{MMJ24}, \cite{CHM} propose efficient numerical methods for computing the density of $W_i$ when the offspring distribution has finite support. For our purposes, however, we adopted a simpler approach, generating the random variables $W_i$ by simulating the Galton–Watson process for a sufficiently large number of generations.

\begin{figure}[htbp]
    \centering
 \includegraphics[height=5.5cm]{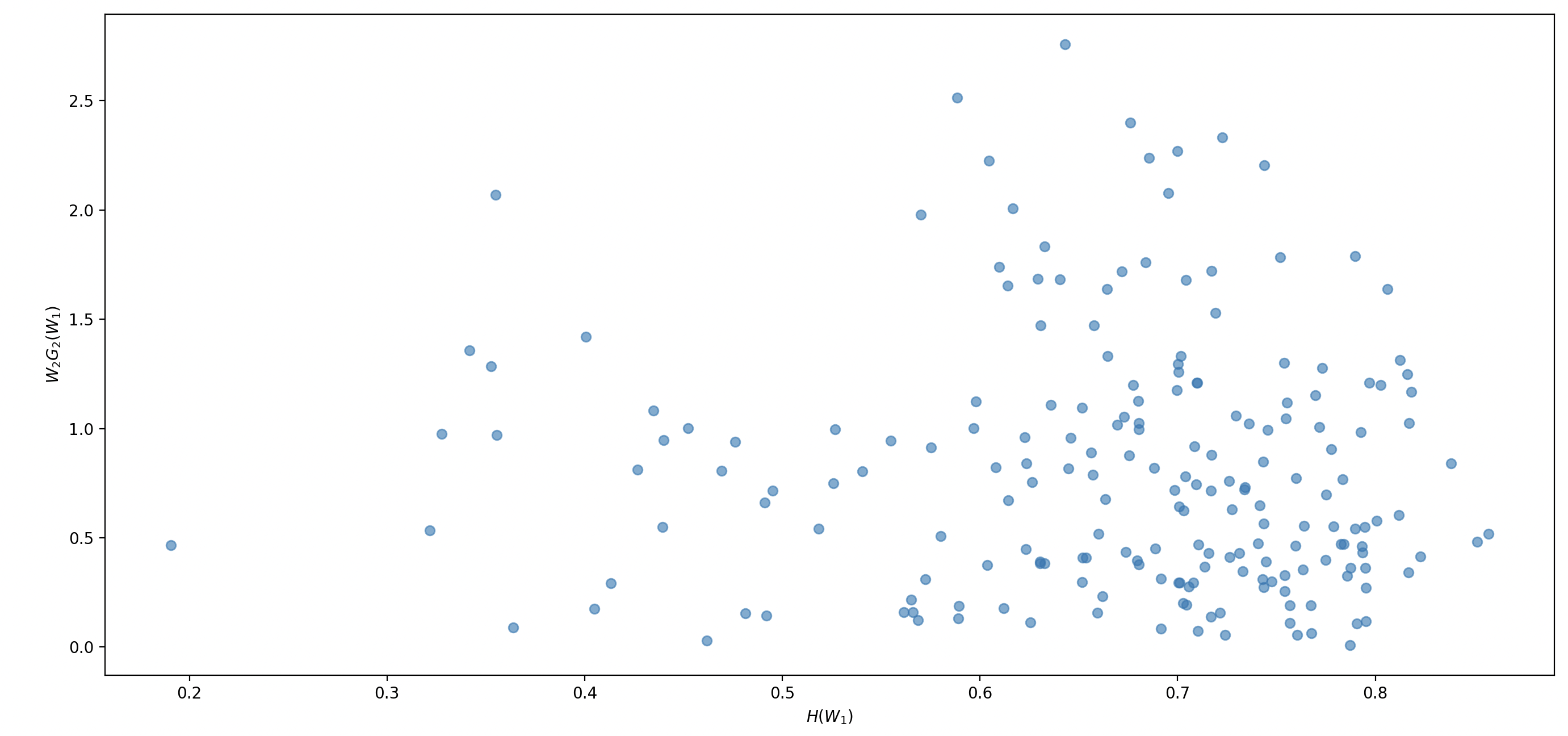}
    \caption{\small The joint distribution of $H(W_1)$ and $W_2G_2(W_1)$ in the case $d=2$ with $b_1=1.9$, $b_2=1.2$. }
    \label{fign}
\end{figure}

The second simulation study is related to \cite{LS}, where a computational analysis is performed to analyze an ultrasensitive sequencing dataset \cite{St} targeting five genomic locations using the SiMSen-seq protocol. Briefly, the protocol comprises a barcoding PCR step followed by a general amplification PCR step. The mathematical model for the amplification PCR step in \cite{LS} fits our definition of a MPCR branching process with $d=5$ and $v_1=\ldots=v_5=0.9$. For illustration purposes, we use the initial values
\begin{equation}\label{ini}
 Z_1(0)=16,\quad Z_2(0)=8,\quad Z_3(0)=4,\quad Z_4(0)=2,\quad Z_5(0)=1.
\end{equation}

To match $K=10^8$ in the study \cite{LS}, we set $K=(1.9)^{29}\approx 121{,}300{,}000$. Since the experimental data are based on $26$ rounds of amplification PCR, we verify Theorem~\ref{thm} in the form
\[
K^{-1}(Z_1(26),\ldots,Z_5(26))
\approx 
\bar W W^{-1}f^{(-3)}(H(W)),
\]
which involves the third iterate of the function
\[
f^{(-1)}(x)
=
\frac{x - 1.9 + \sqrt{x^2 + 0.2x + 3.61}}{2},
\qquad x \ge 0.
\]

Figure~\ref{fig3} demonstrates the good agreement provided by this approximation. The graph shows a clear separation between the components of 200 simulated vectors $\bar Z(26)$, 
reflecting the differences between the initial values \eqref{ini}.

\begin{figure}[htbp]
    \centering
    \includegraphics[height=7cm]{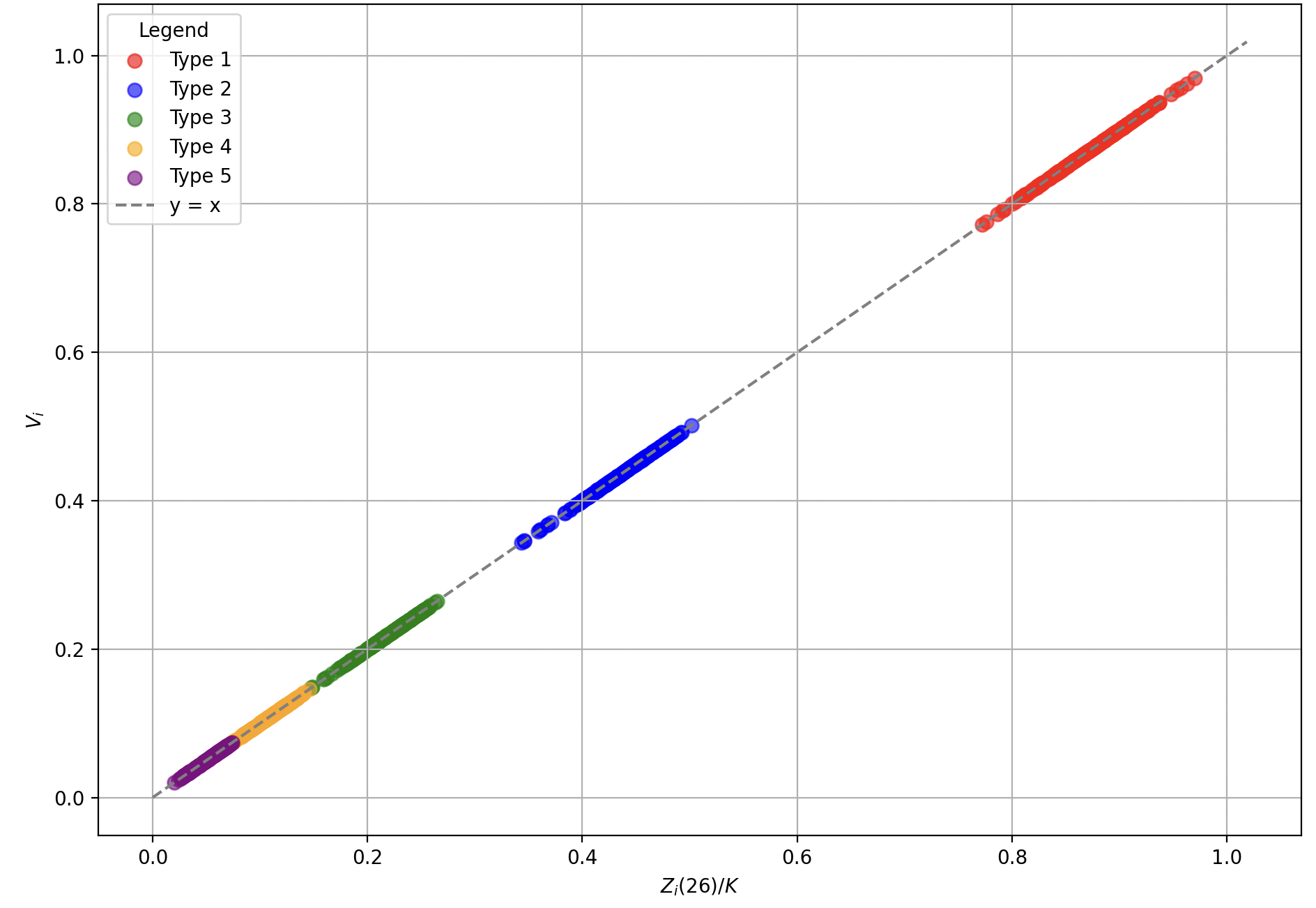}
    \caption{\small An illustration of Theorem~\ref{thm} in the case $d=5$, $b_1=b_2=b_3=b_4=b_5=1.9$, and $K=b_1^{-29}$.  Here the directly simulated values $Z_i(26)/K$ are matched against the limiting values $V_i=W_iW^{-1}f^{(-3)}(H(W))$.
}
    \label{fig3}
\end{figure}

\section{Proof of Theorem \ref{mainthm}}\label{Prf}

The proof implements the approximation method introduced in \cite{Fima17PCR} and utilizes some specific properties of the function $\bar F$ derived in Appendix \ref{app:A}. Throughout this paper, we use the $\ell^1$ norm for vectors and the corresponding operator norm for matrices. 
Generic constants, whose values may change from line to line, are denoted by $C$.
%
%

Fix some constant $c\in (\frac 1 2,1)$ and put $\kappa_c:=\lfloor c\kappa\rfloor$. To prove Theorem \ref{mainthm}, we will argue that $b_i^{-\kappa}  Z_i(\kappa) = (b_1/b_i)^\kappa   X_i(\kappa)$ can be approximated by  
$(b_1/b_i)^\kappa  F^{(\kappa-\kappa_c)}_i(\bar Y(\kappa_c)/K)$. 
More precisely, we will prove that, for all $i=1,\ldots,d$, 
\begin{align}
&(b_1/b_i)^\kappa F^{(\kappa-\kappa_c)}_i\big(\bar Y(\kappa_c)/K\big)  \xrightarrow{\P}  W_i G_i\big(W_0), \label{limi}\\
&(b_1/b_i)^\kappa\Big(X_i(\kappa) -   F^{(\kappa-\kappa_c)}_i\big(\bar Y(\kappa_c)/K\big)\Big) \xrightarrow{\P}0. \label{appi}
\end{align}

For a pair of integers $m\le n$, let us denote by  $\bar \Phi^{m,n}(\bar x)$  the solution to recursion \eqref{stochdyn} at time $n$ subject to initial condition 
$\bar X(m) =\bar x\in K^{-1} \mathbb N$ at time $m$. In this notation, 
$$\bar X(\kappa) = \bar \Phi^{\kappa_c,\kappa}(\bar X(\kappa_c)).$$
To prove \eqref{appi}, we will show that, for all $i=1,\ldots,d$, 
\begin{align}
&(b_1/b_i)^\kappa \E \Big| \Phi^{\kappa_c,\kappa}_i(\bar X(\kappa_c)) -  F^{(\kappa-\kappa_c)}_i(\bar X(\kappa_c))\Big|\to 0, \label{app:1a}\\
&(b_1/b_i)^{\kappa}\Big(  F^{(\kappa-\kappa_c)}_i(\bar X(\kappa_c))  -  F^{(\kappa-\kappa_c)}_i(\bar Y(\kappa_c)/K)\Big)\xrightarrow{\P} 0. \label{app:1b}
\end{align}

\subsection*{Proof of \eqref{limi}}
Let us first verify \eqref{limi} for the first $d_0$  components. In this case  
$$
(b_1/b_i)^\kappa F^{(\kappa-\kappa_c)}_i\big(\bar Y(\kappa_c)/K\big) =
F^{(\kappa-\kappa_c)}_i\left(\frac{b_1^{-\kappa_c}\bar Y(\kappa_c)}{b_1^{\kappa-\kappa_c}} \right), \quad i = 1,\ldots ,d_0,
$$
and, due to \eqref{branchlim},
$$
b_1^{-\kappa_c}\bar Y(\kappa_c)\xrightarrow{\P} \big(W_1,\ldots, W_{d_0}, 0,\ldots,0\big).
$$
Consequently, the claim follows from Lemma \ref{lem:3.1} and the extended continuous mapping theorem (see, e.g., \cite[Theorem 18.11]{VdV}):
\begin{align*}
\bar F^{(\kappa-\kappa_c)}\left(\frac{b_1^{-\kappa_c}\bar Y(\kappa_c)}{b_1^{\kappa-\kappa_c}} \right)
&\xrightarrow{\P}  \frac{H(W_0)}{W_0}\big(W_1,\ldots,W_{d_0}, 0,\ldots,0\big) \\
&=
\big(W_1G_1(W_0),\ldots,W_{d_0}G_{d_0}(W_0), 0,\ldots,0\big),
\end{align*}
where the last equality holds by virtue of identity \eqref{GH}, proved in Lemma \ref{lem:A5}.

To prove \eqref{limi} for $i=d_0+1,\ldots, d$, define $m_i(n)=(b_i/b_1)^{\kappa_c}b_1^{-n}$, where $n=\kappa-\kappa_c$, and
\[W_i(\kappa_c)=Y_i(\kappa_c)/b_i^{\kappa_c}.\]
By Lemma \ref{lem:3.2}, with the  matrix $M(n)$  defined therein, for $i>d_0$, we have
\begin{align*}
(b_1/b_i)^\kappa & F^{(\kappa-\kappa_c)}_i\big(\bar Y(\kappa_c)/K\big) = (b_1/b_i)^{\kappa_c}(b_1/b_i)^{\kappa-\kappa_c}
  F^{(\kappa-\kappa_c)}_i((b_1^{-\kappa_c}\bar Y(\kappa_c))/b_1^{\kappa-\kappa_c}) \\
&=
\frac{b_i^{\kappa_c-\kappa}}{m_i(\kappa-\kappa_c) }F^{(\kappa-\kappa_c)}_i\big(M(\kappa-\kappa_c) \bar W(\kappa_c)\big)
\xrightarrow{\P} W_i G_i(W_0),
\end{align*}
where the convergence holds by \eqref{branchlim} and \eqref{Filim}.

 

\subsection*{Proof of \eqref{app:1a}}

The sequence  $\bar X(n)=\bar \Phi^{\kappa_c,n}(\bar X(\kappa_c))$ satisfies  recursion \eqref{stochdyn} started from $\bar X(\kappa_c)$ at time $\kappa_c$.
On the other hand, the sequence  $\bar x(n)=\bar F^{(n-\kappa_c)}(\bar X(\kappa_c))$ satisfies the recursion
\[
\bar  x(n) = \bar  F(\bar  x(n-1)), \quad n > \kappa_c, 
\]
subject to the same initial condition $\bar x(\kappa_c)=\bar X(\kappa_c)$. Thus for any $m\in \mathbb N$,
\begin{align}\label{detrec}
\bar \Phi^{\kappa_c,\kappa_c+m} (\bar X(\kappa_c))&-\bar F^{(m)}(\bar X(\kappa_c))\\
&=\bar F(\bar \Phi^{\kappa_c,\kappa_c+m-1} (\bar X(\kappa_c)))-\bar F(\bar F^{(m-1)}(\bar X(\kappa_c)))+K^{-1/2}\bar \eps(m).\nonumber
\end{align}
Observe that the stochastic perturbation term in \eqref{stochdyn}, by definition \eqref{epsn}, satisfies 
\begin{equation}\label{epsbnd}
\E \big(|\eps_i(n)|\,\big|\F_{n-1}\big)\le \sqrt{\E \big(\eps_i(n)^2|\F_{n-1}\big)} =  
\sqrt{\frac{v_i X_i(n-1)}{1+X(n-1)}}\le \min \Big(1,   \sqrt{ X_i(n-1)}\Big).
\end{equation}

Since, by Lemma \ref{lem:Lip}, $\bar F$ is $b_1$-Lipschitz,  relations \eqref{detrec} and \eqref{epsbnd} imply
\begin{equation}\label{prev}
\E \left(\big\|\bar \Phi^{\kappa_c,\kappa_c+m} (\bar X(\kappa_c))-\bar F^{(m)}(\bar X(\kappa_c))\big\|\Big|\F_{\kappa_c}\right) \le 
K^{-1/2}\sum_{\ell=1}^{m} b_1^{m-\ell}\le C  b_1^{m-\kappa/2}.
\end{equation}
In particular, 
$$
\E \Big|\Phi^{\kappa_c,\kappa}_i(\bar X(\kappa_c))-F^{(\kappa-\kappa_c)}_i(\bar X(\kappa_c))\Big| \le 
C b_1^{\kappa/2-\kappa_c},
$$
yielding \eqref{app:1a} in the case $i \le d_0$, since $c \in \left(\tfrac{1}{2},1\right)$.

We now consider the case $i > d_0$. Using \eqref{detrec} and \eqref{ineq}, we obtain
\begin{multline*}
\big|\Phi_i^{\kappa_c,\kappa_c+m} (\bar X(\kappa_c)) - F^{(m)}_i(\bar X(\kappa_c))\big|   \le 
b_i \big|\Phi_i^{\kappa_c,\kappa_c+m-1}(\bar X(\kappa_c)) - F^{(m-1)}_i(\bar X(\kappa_c))\big| \\
+ F^{(m-1)}_i(\bar X(\kappa_c))\Big\|
\bar \Phi^{\kappa_c,\kappa_c+m-1}(\bar X(\kappa_c)) - \bar F^{(m-1)}(\bar X(\kappa_c))
\Big\| + K^{-1/2} |\eps_i(m)|.
\end{multline*}
In view of \eqref{epsbnd}, 
$$
K^{-1/2}\E |\eps_i(m)| 
\le 
K^{-1/2}\sqrt{\E  X_i(m-1)}
 \le K^{-1} \sqrt{ \E Y_i(m-1)} \le  b_i^{m/2} b_1^{-\kappa}.
$$
Therefore,  
\begin{align*}
(b_1/b_i)^\kappa&\E \big|\Phi_i^{\kappa_c,\kappa}(\bar X(\kappa_c)) - F^{(\kappa-\kappa_c)}_i(\bar X(\kappa_c))\big|  \\
&\le
b_1^\kappa \sum_{m=1}^{\kappa-\kappa_c} b_i^{-\kappa_c-m}
\E  \Big(F^{(m-1)}_i(\bar X(\kappa_c))\Big\|
\bar \Phi^{\kappa_c,\kappa_c+m-1}(\bar X(\kappa_c)) - \bar F^{(m-1)}(\bar X(\kappa_c))
\Big\|\Big)\\
& +  \sum_{m=1}^{\kappa-\kappa_c} b_i^{-\kappa_c-m/2}.
\end{align*}
By the Markov property of the processes and since $F^{(m-1)}_i(\bar X(\kappa_c))$ is $\F_{\kappa_c}$-measurable, 
\begin{align*}
\E &\Big(F^{(m-1)}_i(\bar X(\kappa_c))\Big\|
\bar\Phi^{\kappa_c,\kappa_c+m-1}(\bar X(\kappa_c)) - F^{m-1}(\bar X(\kappa_c))
\Big\| \Big)\\
&=
\E  \Big( F^{(m-1)}_i(\bar X(\kappa_c))\E \Big(\Big\|
\bar \Phi^{\kappa_c,\kappa_c+m-1}(\bar X(\kappa_c)) - F^{(m-1)}(\bar X(\kappa_c))
\Big\|\, \Big|\F_{\kappa_c}\Big) \Big) \\
&\stackrel{\eqref{prev} }{\le}
\E ( F^{(m-1)}_i(\bar X(\kappa_c))) C b_1^{m-\kappa/2}\stackrel{\eqref{Fibnd} }{\le} 
  C b_i^{m}\E (X_i(\kappa_c) ) b_1^{m-\kappa/2} \stackrel{\eqref{ZleY} }{\le}  
  C b_i^{m+\kappa_c} b_1^{m-3\kappa/2}.
\end{align*}
Consequently, 
\begin{align*}
(b_1/b_i)^\kappa & \E \Big| \Phi^{\kappa_c,\kappa}_i(\bar X(\kappa_c)) -  F^{(\kappa-\kappa_c)}_i(\bar X(\kappa_c))\Big|  \le
C b_1^{-\kappa/2} \sum_{m=1}^{\kappa-\kappa_c} b_1^{m}   +  C b_i^{-\kappa_c} 
\le C b_1^{\kappa/2-\kappa_c}     
+
C b_i^{-\kappa_c},
\end{align*}
leading to \eqref{app:1a} as $c\in (\frac 1 2,1)$.

\subsection*{Proof of \eqref{app:1b}}
Iterating the bound \eqref{ineq} yields
\begin{align*}
&
\big|F^{(\kappa-\kappa_c)}_i (K^{-1} \bar Y(\kappa_c))-F^{(\kappa-\kappa_c)}_i(\bar X(\kappa_c))\big|  \le
b_i^{\kappa-\kappa_c} \big|K^{-1} Y_i(\kappa_c)-X_i(\kappa_c)\big|\\
&+\sum_{m=0}^{\kappa-\kappa_c-1} b_i^{\kappa-\kappa_c-m} F^{(m)}_i(K^{-1} \bar Y(\kappa_c))
\big\|\bar F^{(m)}(K^{-1}\bar Y(\kappa_c))-\bar F^{(m)}(\bar X(\kappa_c))\big\|.
\end{align*}
By \eqref{Fibnd} and the $b_1$-Lipschitz continuity of $\bar F$,
\begin{multline*}
F^{(m)}_i (K^{-1} \bar Y(\kappa_c))
\big\|\bar F^{(m)}(K^{-1}\bar Y(\kappa_c))-\bar F^{(m)}(\bar X(\kappa_c))\big\|\\
 \le  b_i^m K^{-1} Y_i(\kappa_c) b_1^m \big\| K^{-1}\bar Y(\kappa_c)-\bar X(\kappa_c)\big\|   \le
  b_i^mb_1^{m-2\kappa } Y_i(\kappa_c)\big\|  \bar Y(\kappa_c)-\bar Z(\kappa_c)\big\|.    
\end{multline*}
Thus we obtain 
\begin{multline*}
(b_1/b_i)^{ \kappa} \big| F^{(\kappa-\kappa_c)}_i(K^{-1}\bar Y(\kappa_c))- F^{\kappa-\kappa_c}_i(\bar X(\kappa_c))\big| \\
    \le  b_i^{ -\kappa_c}\big| Y_i(\kappa_c)-Z_i(\kappa_c)\big|
+  Cb_i^{ -\kappa_c}  Y_i(\kappa_c)   b_1^{ -\kappa_c}  \big\| \bar Y(\kappa_c)-\bar Z(\kappa_c)\big\|,
\end{multline*}
and since $Y_i(\kappa_c) = O_\P(b_i^{\kappa_c} )$, it remains to apply the next lemma.

\begin{lem}\label{last}
Under the conditions of Theorem \ref{mainthm}, for any $c\in(1/2,1)$,
\[
b_i^{ -\kappa_c}\E\big| Y_i(\kappa_c)-Z_i(\kappa_c)\big| \to0, \quad i=1,\ldots,d.
\]
\end{lem}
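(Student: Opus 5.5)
Since $Z_i(n)\le Y_i(n)$ by \eqref{ZleY}, we have $\E|Y_i(\kappa_c)-Z_i(\kappa_c)| = \E Y_i(\kappa_c)-\E Z_i(\kappa_c)$. So it suffices to bound the expected difference $D_i(n):=\E(Y_i(n)-Z_i(n))$ via a one-step recursion. Conditioning on $\F_{n-1}$ and using \eqref{Zn}, \eqref{Yn} and \eqref{mx}, we get
\[
\E\big(Y_i(n)-Z_i(n)\,\big|\,\F_{n-1}\big) = b_i\big(Y_i(n-1)-Z_i(n-1)\big) + v_i Z_i(n-1)\Big(1-\frac{K}{K+Z(n-1)}\Big).
\]
The last factor equals $\frac{Z(n-1)}{K+Z(n-1)}\le K^{-1}Z(n-1)\le K^{-1}Y(n-1)$. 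Hence
\[
D_i(n)\le b_i D_i(n-1) + v_i K^{-1}\E\big(Y_i(n-1)Y(n-1)\big),\qquad D_i(0)=0 .
\]

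Next I bound the second moment term. The processes $Y_1,\dots,Y_d$ are independent Galton--Watson processes with bounded offspring, so $\E Y_j(m)=b_j^mZ_j(0)$ and $\E Y_i(m)^2\le C b_i^{2m}$ (the $\mathbb L_2$ convergence in \eqref{branchlim}). Therefore
\[
\E\big(Y_i(m)Y(m)\big)\le C\sum_j b_i^m b_j^m\le C b_i^m b_1^m,
\]
since $b_j\le b_1$. Unrolling the recursion gives
\[
D_i(n)\le C K^{-1}\sum_{m=0}^{n-1} b_i^{\,n-1-m}\, b_i^m b_1^m \le C K^{-1} b_i^{\,n}\sum_{m<n} b_1^m\le C b_i^n b_1^{n}K^{-1}.
\]

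Finally, set $n=\kappa_c$ and use $K=b_1^\kappa$:
\[
b_i^{-\kappa_c}D_i(\kappa_c)\le C b_1^{\kappa_c-\kappa}\le C b_1^{-(1-c)\kappa+1}\to 0,
\]
because $c<1$. The only point requiring care is the uniform second moment bound $\E Y_i(m)^2\le Cb_i^{2m}$. This is standard for supercritical Galton--Watson processes with finite offspring variance: the variance recursion gives $\mathrm{Var}\,Y_i(m)=O(b_i^{2m})$. The restriction $c>1/2$ is not needed here; only $c<1$ is used.
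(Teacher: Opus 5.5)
Your proof is correct, and it takes a genuinely different route from the paper.

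\textbf{The paper's route.} It builds an auxiliary Galton--Watson process $V$, driven by the same uniforms, with reduced success probability $v_i/(1+dK^{\gamma-1})$, $\gamma\in(c,1)$. Then $V_i\le Y_i$ always, and $V_i\le Z_i$ up to the time $\tau$ at which some $Z_j$ reaches $K^\gamma$. It splits $Y_i-Z_i\le (Y_i-V_i)+V_i\mathbf{1}\{n>\tau\}$. The first mean is bounded by comparing the two Galton--Watson growth rates. The second is bounded by Cauchy--Schwarz combined with Doob's maximal inequality for $b_i^{-m}Y_i(m)$. The result is $C\kappa_cK^{\gamma-1}+CK^{(c-\gamma)/2}$.

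\textbf{Your route.} You compute the exact conditional drift of $Y_i-Z_i$: the linear part $b_i(Y_i-Z_i)$ plus the forcing term $v_iZ_i\,Z/(K+Z)\le v_iK^{-1}Y_iY$. That forcing term is quadratic in the population, so second moments of the Galton--Watson processes alone control it.

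\textbf{Comparison.} Your argument needs no auxiliary process, stopping time or maximal inequality. It gives $b_i^{-n}\E(Y_i(n)-Z_i(n))\le Cb_1^nK^{-1}$ for every $n$, hence the rate $O(K^{c-1})$ at $n=\kappa_c$. The paper's estimate, even after optimizing at $\gamma=(c+2)/3$, is only of order $\kappa K^{-(1-c)/3}$. In exchange, the paper's construction yields a pathwise sandwich $V_i\le Z_i\le Y_i$ on $\{n\le\tau\}$, which is more than the lemma requires. You are also right that only $c<1$ matters here; the paper's argument likewise never uses $c>1/2$, which is needed elsewhere, for \eqref{app:1a}.
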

 
\begin{proof}
Fix a constant $\gamma \in (c,1)$ and let $V(n)$ be a vector of independent Galton-Watson processes 
$$
V_i(n)=V_i(n-1) + \sum_{j=1}^{V_i(n-1)} \zeta_{i,j,n}, \quad i=1,\ldots,d,
$$
subject to $V_i(0)=Z_i(0)$, where, for the same random variables $U_{i,j,n}$ as in \eqref{xieta},
$$
\zeta_{i,j,n} =\mathbf{1}\left\{U_{i,j,n} \leq \frac{v_i}{1+dK^{\gamma-1}} \right\}.
$$
By this construction, for any $i=1,\ldots,d$, 
\begin{align*}
& V_i(n)\le Y_i(n), \quad \forall n\in \mathbb N,
\\
&
V_i(n) \le Z_i(n),  \quad \forall n\le\tau, 
\end{align*}
where $\tau$ is the stopping time at which some component of $\bar Z_n$ exceeds $K^\gamma$:
$$
\tau =\min \Big\{m\ge 0: \max_{i=1,\ldots,d}Z_i(m)\ge K^\gamma\Big\}.
$$
Then for any $n\in \mathbb N$,
\begin{equation}\label{dbnd}
0\le  Y_i(n) - Z_i(n) = Y_i(n) - V_i(n) + V_i(n)- Z_i(n) \le 
Y_i(n) - V_i(n) +  V_i(n) \one{n>\tau}.
\end{equation}
Since $Y_i(n)$ and $V_i(n)$ are Galton-Watson processes 
\begin{align*}
0\le \E (Y_i(\kappa_c)- V_i(\kappa_c) )&=  Z_i(0) b_i^{\kappa_c}-Z_i(0)\left(1+\frac{v_i}{1+dK^{\gamma-1}}\right)^{\kappa_c}  \\
&=
Z_i(0)b_i^{\kappa_c}-Z_i(0)b_i^{\kappa_c}\left(1-\frac{ (1-b_i^{-1})dK^{\gamma-1}}{1+dK^{\gamma-1}}\right)^{\kappa_c} \le  C \kappa_c b_i^{\kappa_c} K^{\gamma-1}.
\end{align*}
By the Cauchy-Schwarz inequality and the formula for the second moment of the Galton-Watson process \cite{Haccou}, 
$$
\E (V_i(\kappa_c) \one{\kappa_c>\tau}) \le \sqrt{\E ( V_i(\kappa_c)^2) \P(\kappa_c>\tau)} \le  C b_i^{\kappa_c} \sqrt{\P(\kappa_c>\tau)}.
$$
 Observe also that   
\begin{align*}
\P(\kappa_c>\tau) &\le  \P\left(\max_{1\le i\le d}\max_{m\le \kappa_c}Z_i(m)\ge K^\gamma\right) \le \sum_{i=1}^d \P\left(\max_{m\le \kappa_c}Z_i(m)\ge K^\gamma\right) \\
&\le 
 \sum_{i=1}^d \P\left(\max_{m\le \kappa_c}Y_i(m)\ge K^\gamma\right) \le 
 \sum_{i=1}^d \P\left(\max_{m\le \kappa_c} b_i^{-m}Y_i(m)\ge  (b_1/b_i)^{\kappa_c} K^{\gamma-c}\right)  \\
 &\stackrel{\dagger} \le\sum_{i=1}^d  Z_i(0) (b_i/b_1)^{\kappa_c} K^{c-\gamma} \le   C K^{c-\gamma},
\end{align*}
where $\dagger$ holds by Doob's inequality. 
Plugging these estimates into \eqref{dbnd} we obtain 
$$
0\le b_i^{-\kappa_c}\E\big(Y_i(\kappa_c) - Z_i(\kappa_c)\big) \le C\kappa_c  K^{\gamma-1} +    C K^{(c-\gamma)/2},
$$
which leads to the statement of Lemma \ref{last}.
\end{proof}

\section{Proof of Theorem \ref{thm}}
Due to  \eqref{claim2} and the identity \eqref{GH},
$$
\bar X(\kappa) \to \bar X(\infty)= \Big(\frac{W_1}{W_0},\ldots, \frac{W_{d_0}}{W_0},0,\ldots,0\Big)H(W_0).
$$
Since $\bar F$ is continuous and it has a continuous inverse (Lemma \ref{lem:inv}), it follows from \eqref{stochdyn} that for any fixed $n\in \mathbb Z$, 
$$
\bar X(\kappa +n) \xrightarrow{\P}  \bar F^{(n)}\big(\bar X(\infty)\big)=\Big(\frac{W_1}{W_0},\ldots, \frac{W_{d_0}}{W_0},0,\ldots,0\Big)f^{(n)}(H(W_0)),
$$
where the last equality follows from Lemma \ref{lem:iter}.

\appendix 
\section{Properties of $\bar F$} \label{app:A}

In this section we summarize some properties of the function $\bar F$ defined in \eqref{barF}, instrumental in the proofs of the main theorems.

\subsection{Invertibility.}

\begin{lem}\label{lem:inv}
The function $\bar F$ has a continuous inverse on $\Real_+^d$. The subset 
\begin{equation}\label{Gamma}
\Gamma = \big\{\bar x\in \Real_+^d: x_{d_0+1}=...=x_d=0\big\}
\end{equation}
is both backward and forward invariant with respect to $\bar F$. The restriction of the inverse to $\Gamma$ admits the formula:
\begin{equation}\label{invinv}
\bar F^{-1}(\bar y) = \frac {f^{-1}(y)}{y}\bar y, \quad \bar y\in \Gamma.
\end{equation}
\end{lem}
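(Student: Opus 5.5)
The plan is to reduce invertibility of $\bar F$ to invertibility of a one-dimensional map for the total mass. For $\bar x\in\Real_+^d$ write $s=x=x_1+\cdots+x_d$. Then $F_i(\bar x)=x_i\big(1+\frac{v_i}{1+s}\big)$. Given a target $\bar y\in\Real_+^d$, the equation $\bar F(\bar x)=\bar y$ forces
\[
x_i=\frac{y_i(1+s)}{1+s+v_i},\qquad i=1,\ldots,d,
\]
so $\bar x$ is determined by the scalar $s$. Consistency requires $s=\psi_{\bar y}(s):=\sum_i \frac{y_i(1+s)}{1+s+v_i}$. Equivalently, $s$ must be a root of
\[
g_{\bar y}(s):=\sum_{i=1}^d y_i\Big(1-\frac{v_i}{1+s+v_i}\Big)-s=0 .
\]

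The key step is to show that $g_{\bar y}$ has exactly one root in $[0,\infty)$. At $s=0$ we have $g_{\bar y}(0)=\sum_i y_i\frac{1}{1+v_i}\ge 0$. As $s\to\infty$, $g_{\bar y}(s)\to-\infty$. For uniqueness, each map $s\mapsto\frac{1+s}{1+s+v_i}$ is increasing with derivative $\frac{v_i}{(1+s+v_i)^2}$. So $g_{\bar y}'(s)=\sum_i y_i\frac{v_i}{(1+s+v_i)^2}-1$. Differentiating directly is not enough, so I would argue instead that $g_{\bar y}$ is concave, since each term $-\frac{y_iv_i}{1+s+v_i}$ is concave in $s$. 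A concave function that is $\ge0$ at $0$ and tends to $-\infty$ has exactly one zero on $(0,\infty)$ when $g_{\bar y}(0)>0$. When $\bar y=\bar 0$ the root is $s=0$ and it is unique because $g_{\bar 0}(s)=-s$. This gives a unique preimage $\bar x=\bar F^{-1}(\bar y)$, which lies in $\Real_+^d$. The map is also onto $\Real_+^d$, because every $\bar y$ produces a valid nonnegative $\bar x$ whose total equals the root $s$.

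I expect continuity of the inverse to be the main technical point. I would handle it with the implicit function theorem applied to $(s,\bar y)\mapsto g_{\bar y}(s)$ at the root, where $g_{\bar y}$ crosses zero. Strict concavity, or linearity with slope $-1$ in the degenerate case, together with $g\ge0$ at $0$, gives $g_{\bar y}'(s^*)<0$ at the root: if the derivative were $\ge0$ there, concavity would force $g_{\bar y}>0$ nowhere to the left, contradicting $g_{\bar y}(0)\ge0$ unless $s^*=0$ and $\bar y=\bar 0$, which is checked directly. Hence $s^*(\bar y)$ is continuous, and so is $\bar x$. An alternative route is a compactness argument: $\bar F$ is continuous and proper, since $\|\bar F(\bar x)\|\ge\|\bar x\|$, and it is bijective, so its inverse is continuous.

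Invariance of $\Gamma$ is immediate from the formulas, since $x_i=0\iff F_i(\bar x)=0$. For \eqref{invinv}, take $\bar y\in\Gamma$. Then only the indices $i\le d_0$ contribute, and they all have $v_i=v_1$. Summing $F_i$ gives $y=s+\frac{v_1 s}{1+s}=f(s)$, so $s=f^{-1}(y)$. Each component then satisfies $x_i=y_i\frac{1+s}{1+s+v_1}=y_i\frac{s}{f(s)}=\frac{f^{-1}(y)}{y}y_i$, which is \eqref{invinv}.
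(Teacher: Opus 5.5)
Your proof is correct and follows the paper's argument. Your $g_{\bar y}$ is exactly $-\psi$ from the paper, and the remaining steps match it: uniqueness of the root by concavity (the paper uses convexity of $\psi$), the componentwise formula $x_i=\frac{1+s}{b_i+s}\,y_i$, and summing to get $y=f(s)$ on $\Gamma$. You are in fact slightly more careful than the paper on two points: you treat $\bar y=\bar 0$, where the paper's claim $\psi(0)<0$ fails, and you justify continuity of the root (via the implicit function theorem or properness of $\bar F$), which the paper only asserts.
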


\begin{proof}
Fix a vector $\bar y\in \Real_+^d$ and consider the equation $\bar F(\bar x)=\bar y$, i.e.,
$$
x_i\Big(1+\frac{v_i }{1+x}\Big)=y_i, \quad i=1,\ldots,d.
$$
By rearranging and summing over $i$ we get  
$$
x = \sum_{i=1}^d\Big(1+\frac{v_i }{1+x}\Big)^{-1} y_i = \sum_{i=1}^d  \frac{1+x}{b_i+x }  y_i.
$$
Define the function  
$$
\psi(t) = t-\sum_{i=1}^d  \frac{1+t}{b_i+t }  y_i, \quad t\in \Real_+.
$$
Since   
$$
\psi(0)= -\sum_{i=1}^d  y_i/b_i  <0,\quad \psi'(t) =1 -\sum_{i=1}^d  \frac{v_iy_i}{(b_i+t)^2 }  , \quad 
\psi''(t) =  2\sum_{i=1}^d  \frac{v_iy_i}{(b_i+t)^3 } >0, 
$$
and $\psi(t)\to \infty$ as $t\to\infty$, the function $\psi$ has a unique root $\tau(\bar y)\in (0,\infty)$, continuous in $\bar y$. Then the inverse of $\bar F$ is given by 
\begin{equation}\label{inv}
F^{(-1)}_i(\bar y) =  \frac{1+\tau(\bar y)}{b_i+\tau(\bar y) }  y_i, \quad i=1,\ldots,d.
\end{equation}

By definition \eqref{barF}, $\bar F(\Gamma)\subseteq \Gamma$ and thus $\Gamma$ is forward invariant. Due to \eqref{inv}, $F^{-1}_i(\bar y) =0$ for $i>d_0$
when $y\in \Gamma$ and hence $\bar F^{-1}(\Gamma)\subseteq \Gamma$, i.e., $\Gamma$ is also backward invariant.
Moreover, for $y\in \Gamma$, summing up the equations in the system $\bar F(\bar x)=\bar y$ yields
$$
y = x\frac{b_1+x}{1+x} = f(x),
$$
and consequently,  
$$
y_i = x_i \frac{b_1+x}{1+x} = \frac{x_i}x f(x), \quad i=1,...,d_0.
$$
The formula \eqref{invinv} is obtained by substitution $x = f^{-1}(y)$.
\end{proof}

\subsection{Regularity and growth}

\begin{lem}\label{lem:Lip}
The function $\bar F$, defined in \eqref{barF}, is $b_1$-Lipschitz with respect to the $\ell_1$-norm, and it
satisfies  
\begin{equation}\label{ineq}
\big|F_i(\bar x)-F_i(\bar y)\big| \le b_i |x_i-y_i| +   x_i \|\bar x-\bar y\|, \quad \bar x,\bar y\in \Real_+^d,\quad i=1,\ldots,d.
\end{equation}
The $n$-th iterate of $\bar F$ admits the bound 
\begin{equation}
\label{Fibnd}
F^{(n)}_i (\bar x) \le b_i^n x_i, \quad \bar x\in \Real_+^d,\quad i=1,\ldots,d.
\end{equation}

\end{lem}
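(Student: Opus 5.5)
The plan is to derive all three claims of Lemma~\ref{lem:Lip} from the explicit form \eqref{barF}, $F_i(\bar x)=x_i\big(1+\frac{v_i}{1+x}\big)$, by direct differentiation and elementary inequalities on $\Real_+^d$. The key device is to write $\phi_i(s)=\frac{v_i}{1+s}$, $s\ge 0$, so that $F_i(\bar x)=x_i(1+\phi_i(x))$. Here $0<\phi_i\le v_i$ and $|\phi_i'(s)|=\frac{v_i}{(1+s)^2}\le v_i\le 1$.

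\textbf{Inequality \eqref{ineq}.} Add and subtract $y_i(1+\phi_i(x))$:
\[
F_i(\bar x)-F_i(\bar y)=(x_i-y_i)\big(1+\phi_i(x)\big)+y_i\big(\phi_i(x)-\phi_i(y)\big).
\]
The first term is at most $b_i|x_i-y_i|$ in absolute value. In the second term, $|\phi_i(x)-\phi_i(y)|\le|x-y|\le\|\bar x-\bar y\|$ by the mean value theorem and the $\ell^1$ norm. This gives the bound with the factor $y_i$. The statement has $x_i$ instead, so I will use the symmetric splitting: add and subtract $x_i(1+\phi_i(y))$ rather than $y_i(1+\phi_i(x))$. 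That version has first term $(x_i-y_i)(1+\phi_i(y))$ and second term $x_i(\phi_i(x)-\phi_i(y))$, which yields \eqref{ineq} exactly.

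\textbf{The $b_1$-Lipschitz property.} Summing \eqref{ineq} over $i$ only gives the constant $b_1+x$, which is not enough, so I will bound the Jacobian in the $\ell^1$ operator norm, that is, the maximal column sum. The entries are
\[
\partial_j F_i=\delta_{ij}\big(1+\phi_i(x)\big)+x_i\phi_i'(x).
\]
The off-diagonal entries are negative. Column $j$ has absolute sum
\[
1+\phi_j(x)-x_j\phi_j'(x)\cdot(\text{sign care})+\sum_{i\ne j}x_i\frac{v_i}{(1+x)^2}.
\]
The diagonal entry is $1+\frac{v_j}{1+x}-\frac{x_jv_j}{(1+x)^2}\ge 1-\frac{v_j x}{(1+x)^2}\ge 0$, so it is positive. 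The column sum is therefore
\[
1+\frac{v_j}{1+x}-\frac{x_jv_j}{(1+x)^2}+\sum_{i\ne j}\frac{x_iv_i}{(1+x)^2}
\le 1+\frac{v_1}{1+x}+\frac{v_1 x}{(1+x)^2}.
\]
This is at most $1+v_1\frac{1+2x}{(1+x)^2}\le 1+v_1=b_1$, because $(1+x)^2\ge 1+2x$. Since $\Real_+^d$ is convex, integrating the Jacobian along segments gives $\|\bar F(\bar x)-\bar F(\bar y)\|\le b_1\|\bar x-\bar y\|$. The main care needed in the whole proof is this sign bookkeeping in the column sums.

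\textbf{Bound \eqref{Fibnd}.} Since $F_i(\bar x)\le x_i(1+v_i)=b_ix_i$ and $\bar F$ maps $\Real_+^d$ into itself, induction on $n$ gives
\[
F^{(n)}_i(\bar x)=F_i\big(\bar F^{(n-1)}(\bar x)\big)\le b_iF^{(n-1)}_i(\bar x)\le b_i^nx_i.
\]
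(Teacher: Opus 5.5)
Your proof is correct and follows the paper's route. The $b_1$-Lipschitz property comes from the maximal column sum of the Jacobian, with the same estimate $1+v_1(1+2x)/(1+x)^2\le b_1$, and \eqref{Fibnd} follows by induction from $F_i(\bar x)\le b_ix_i$. The only cosmetic difference is in \eqref{ineq}: you split $F_i=x_i(1+\phi_i(x))$ as $(x_i-y_i)(1+\phi_i(y))+x_i(\phi_i(x)-\phi_i(y))$, whereas the paper uses the auxiliary inequality $|a/(1+a+b)-c/(1+c+d)|\le|a-c|+c|b-d|$. Both yield the stated bound, and yours carries an extra factor $v_i$ on the second term.
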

\begin{proof}
The partial derivatives of $\bar F$ satisfy the bounds 
$$
\partial_i F_i(\bar x)  = 
1+\frac{ v_i(1+x-x_i ) }{(1+x)^2} \le 1+  \frac{ v_1     }{ 1+x},
$$
and, for $k\ne i$, 
$$
\left|\partial_k F_i(\bar x)\right| = 
\Big|- \frac{v_ix_i}{(1+x)^2}\Big|\le   \frac{ v_1x_i}{(1+x)^2}.
$$
Therefore, the $\ell_1$-operator norm of the Jacobian matrix admits the estimate 
\begin{align*}
\big\|\nabla \bar F(\bar x)\big\|_{1} &=
\max_{1\le k\le d}\sum_{i=1}^d \big|\partial_k F_i(\bar x)\big| \\
&\le
\max_{1\le k\le d}\Big(
1+  \frac{ v_1     }{ 1+x}
+\sum_{i\ne k}     \frac{ v_1x_i}{(1+x)^2}
\Big) \le 1+    
\frac{ v_1(1+2x)   }{(1+x)^2 }\le 1+v_1=b_1,
\end{align*}
and, consequently, $\bar F$ is $b_1$-Lipschitz.

To prove \eqref{ineq}, note that, for any $a,b,c,d\in \Real_+$,
\begin{align*}
\Big|\frac{a}{1+a+b}-\frac{c}{1+c+d}\Big| 
&\le  
\Big|\frac{a}{1+a+b}-\frac{c}{1+c+b}\Big|+\Big|\frac{c}{1+c+b}-\frac{c}{1+c+d}\Big| 
\le |a-c| + c|b-d|.
\end{align*}
Therefore, for $\bar x,\bar y\in \Real_+^d$, 
\begin{align*}
\Big|F_i(\bar x)-F_i(\bar y)\Big| &\le    \big|x_i-y_i\big|+v_i \Big|
\frac{ y_i}{1+y_i+\sum_{j\ne i} y_j}-\frac{x_i}{1+x_i + \sum_{j\ne i} x_j}
\Big|  \\
&\le
\big|x_i-y_i\big|+v_i \big|x_i-y_i\big|+ v_i x_i 
  \sum_{j\ne i} \big|x_j -  y_j \big| \le 
  b_i \big|x_i-y_i\big| +   x_i \|\bar x-\bar y\|.
\end{align*}
The growth estimate in \eqref{Fibnd} holds since $F_i(\bar x)\le (1+v_i)x_i=b_i x_i$.
\end{proof}

\subsection{Iterates}

The following lemma reveals that the iterates of $\bar F$ have a particularly simple form if started from a vector with zeros in the last $d-d_0$ entries. 
\begin{lem}\label{lem:iter}
For any $n\in \mathbb Z$, 
\begin{equation}\label{Ffn}
\bar F^{(n)} (\bar x)= \frac{f^{(n)} (x)\bar x}{x},\quad \bar x\in \Gamma,
\end{equation}
where $\Gamma$ is defined in \eqref{Gamma} and $f^{(n)}$ is the $n$-th iterate of the function in \eqref{fx}.
\end{lem}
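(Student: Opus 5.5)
The plan is to prove \eqref{Ffn} first for $n=1$, extend it to all $n\ge 1$ by induction using the forward invariance of $\Gamma$, and then treat negative $n$ through the inverse formula \eqref{invinv} of Lemma \ref{lem:inv}. The case $n=0$ is trivial: $f^{(0)}(x)=x$, so the right-hand side is $\bar x$. For $\bar x=\bar 0$ both sides are read as $\bar 0$, since $f^{(n)}(0)=0$ and $f^{(n)}(x)/x$ stays bounded as $x\downarrow 0$. So we may assume $x>0$.

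\emph{Step 1 ($n=1$).} For $\bar x\in\Gamma$ only the first $d_0$ coordinates are nonzero, and for these $v_i=v_1$. Hence, by \eqref{barF},
\[
F_i(\bar x)=x_i\Big(1+\frac{v_1}{1+x}\Big)=\frac{x_i}{x}\,f(x),\qquad i\le d_0,
\]
while $F_i(\bar x)=0=\frac{x_i}{x}f(x)$ for $i>d_0$.

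\emph{Step 2 ($n\ge 2$).} Suppose \eqref{Ffn} holds for $n$. Put $\bar y=\bar F^{(n)}(\bar x)$. By forward invariance (Lemma \ref{lem:inv}), $\bar y\in\Gamma$. Summing its coordinates gives $y=f^{(n)}(x)$, so that $\bar y/y=\bar x/x$. Applying Step 1 to $\bar y$ then yields
\[
\bar F^{(n+1)}(\bar x)=\frac{f(y)}{y}\,\bar y=\frac{f^{(n+1)}(x)}{x}\,\bar x .
\]

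\emph{Step 3 (negative $n$).} By Lemma \ref{lem:inv}, $\Gamma$ is backward invariant and \eqref{invinv} gives $\bar F^{-1}(\bar y)=\frac{f^{-1}(y)}{y}\,\bar y$. This has exactly the same ``direction preserved, total mass mapped by a scalar function'' form as Step 1, with $f^{-1}$ in place of $f$. The same induction therefore gives $\bar F^{(-n)}(\bar x)=\frac{f^{(-n)}(x)}{x}\,\bar x$.

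The only point needing care is that $f^{-1}$ is well defined on $\mathbb R_+$. This holds because $f$ is continuous and strictly increasing, with $f(0)=0$ and $f(r)\to\infty$, so $f$ is a bijection of $\mathbb R_+$. Also, $f^{(n)}(x)>0$ for $x>0$, so the normalization by $y$ never divides by zero. Beyond this bookkeeping I do not expect any real obstacle.
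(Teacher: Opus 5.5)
Your proposal is correct and follows the paper's proof essentially step for step. Like the paper, you check $n=1$ using $v_i=v_1$ for $i\le d_0$, induct on $n\ge 1$ via the forward invariance of $\Gamma$, and treat $n<0$ by the same induction with \eqref{invinv}. Your extra remarks on the case $\bar x=\bar 0$ and on $f$ being a bijection of $\mathbb{R}_+$ are harmless bookkeeping that the paper leaves implicit.
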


\begin{proof}
By definitions \eqref{barF} and \eqref{fx} and assumption \eqref{viq}, for $\bar x\in \Gamma$,
$$
F_i(x) = \begin{cases}
\dfrac{x_i}x f(x), & i\le d_0 \\
0,& i>d_0,
\end{cases}
$$  
which, in vector notation, verifies \eqref{Ffn} for $n=1$. By Lemma \ref{lem:inv}, the subset $\Gamma$ is forward invariant, and hence \eqref{Ffn} holds 
for an arbitrary $n>1$ by induction:
$$
\bar F^{(n)}(\bar x) = \bar F \big(\bar F^{(n-1)}(\bar x)\big) = \frac{f(F^{(n-1)}(\bar x))}{F^{(n-1)}(\bar x)}\bar F^{(n-1)}(\bar x) =
\frac{f(f^{(n-1)}(x))}{f^{(n-1)}(x)}\frac{f^{(n-1)}(x)}{x}\bar x = \frac{f^{(n)}(x)}{x}\bar x.
$$
The same argument applies for $n<0$ by virtue of the formula \eqref{invinv}. 
\end{proof}

\subsection{Scaling limits}
In this section we derive two key limits for the iterates of $\bar F$ under a suitable scaling. 
The following lemma addresses the first $d_0$  fastest-growing components. 

\begin{lem}\label{lem:3.1}
Let $H$ be the limit in \eqref{Hx}. Then as $n\to \infty$,
\begin{equation}\label{F1lim}
\bar F^{(n)} (\bar x/b_1^n)\to \frac  {H(x_0)}{x_0 } (x_1,\ldots,x_{d_0},0,\ldots,0),
\quad  \bar x\in \Real_+^d,  
\end{equation}
where the convergence is uniform on compact subsets of $\Real_+^d$ and $x_0=x_1+\ldots +x_{d_0}$.
\end{lem}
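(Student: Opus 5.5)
The plan is to reduce to the invariant subset $\Gamma$ of \eqref{Gamma}, where Lemma \ref{lem:iter} makes the iterates explicit. The remaining task is to show that the slower components, those with $i>d_0$, do not disturb the limit.

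\emph{Step 1: the case $\bar x\in\Gamma$.} Here Lemma \ref{lem:iter} gives
\[
\bar F^{(n)}(\bar x/b_1^n)=\frac{f^{(n)}(x_0 b_1^{-n})}{x_0}\,\bar x .
\]
By \eqref{Hx}, $f^{(n)}(b_1^{-n}r)\to H(r)$. To get uniformity on compacts $r\in[0,R]$, I would combine two facts: $r\mapsto f^{(n)}(b_1^{-n}r)$ is increasing in $r$, and the limit $H$ is continuous. This is the standard Dini/P\'olya argument for monotone functions converging pointwise to a continuous limit. Near $x_0=0$ the ratio $f^{(n)}(b_1^{-n}r)/r$ is controlled by the two-sided bound $r-r^2\le f^{(n)}(b_1^{-n}r)\le r$, which gives $|f^{(n)}(b_1^{-n}r)/r-H(r)/r|\le r$. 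Hence the map $\bar x\mapsto \frac{f^{(n)}(x_0b_1^{-n})}{x_0}\bar x$ converges uniformly on compacts of $\Gamma$, with value $0$ when $x_0=0$.

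\emph{Step 2: general $\bar x$.} Write $\bar x=\bar x'+\bar x''$, where $\bar x'=(x_1,\dots,x_{d_0},0,\dots,0)\in\Gamma$. The $b_1$-Lipschitz property (Lemma \ref{lem:Lip}) alone only gives $\|\bar F^{(n)}(\bar x/b_1^n)-\bar F^{(n)}(\bar x'/b_1^n)\|\le \|\bar x''\|$, which does not vanish, so a sharper comparison is needed. For $i>d_0$, \eqref{Fibnd} gives $F_i^{(n)}(\bar x/b_1^n)\le (b_i/b_1)^n x_i\to0$ uniformly on compacts. So the slow components vanish, and it remains to handle the first $d_0$ components.

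For the first $d_0$ components, I would run a perturbation argument along the orbit. Set $\bar u(m)=\bar F^{(m)}(\bar x/b_1^n)$ and $\bar w(m)=\bar F^{(m)}(\bar x'/b_1^n)$. Rather than the global Lipschitz constant, I would use the finer estimate obtained from the Jacobian bound in the proof of Lemma \ref{lem:Lip}: along the segment between the two points, the local Lipschitz constant is $1+v_1(1+2s)/(1+s)^2$, and the difference grows roughly by the factor $b_1$ per step while the orbit is small. The key observation is that the perturbation enters only through the total mass $u(m)$ in the denominators. Its contribution is $\sum_{i>d_0}u_i(m)\le \sum_{i>d_0}b_i^m x_i b_1^{-n}$, which is small compared with the main part $\asymp b_1^{m-n}$.

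Concretely, I would use \eqref{ineq} summed over $i\le d_0$:
\[
\delta(m+1)\le b_1\delta(m)+u_0(m)\bigl(\delta(m)+\epsilon(m)\bigr),
\]
where $\delta(m)=\sum_{i\le d_0}|u_i(m)-w_i(m)|$ and $\epsilon(m)=\sum_{i>d_0}u_i(m)\le C(b_{d_0+1}/b_1)^m b_1^{m-n}$. This direct iteration, however, gives $\delta(n)\lesssim b_1^n\sum_m\epsilon(m)b_1^{-m}\cdot O(1)$, which is only $O(1)$, not small. That is the main obstacle: the first-order effect of $\epsilon$ on $\delta$ must be shown to be genuinely small. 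The remedy I would try is to replace \eqref{ineq} by a sharper coupling estimate for the first $d_0$ coordinates. Writing $F_i(\bar u)=u_i\bigl(1+v_1/(1+u)\bigr)$, the effect of the extra mass is the factor $\bigl(1+v_1/(1+u_0+\epsilon)\bigr)/\bigl(1+v_1/(1+u_0)\bigr)\ge 1-v_1\epsilon$. Comparing the products over $m$ then gives relative errors $\sum_m v_1\epsilon(m)\to0$, because $\sum_m\epsilon(m)\le C\sum_m (b_{d_0+1}/b_1)^m b_1^{m-n}$, which is dominated by its last terms and is $O(b_1^{-n}b_{d_0+1}^n)\cdot O(1)\to0$. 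On the other side, the bare comparison with $u_0$ replaced by $w_0$ is handled by monotonicity of the scalar dynamics via $f$. So I would establish two-sided multiplicative bounds
\[
w_i(m)\prod_{k<m}(1-v_1\epsilon(k))\le u_i(m)\le w_i(m)\qquad (i\le d_0),
\]
using the fact that $t\mapsto 1+v_1/(1+t)$ is decreasing and that the aggregate $u_0$ is sandwiched between orbits of $f$. Together with Step 1 and the uniform bounds, this yields \eqref{F1lim} uniformly on compacts.
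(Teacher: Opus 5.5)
Your proof is correct, but for the first $d_0$ coordinates it takes a different route from the paper. Step~1 and your handling of the slow coordinates via \eqref{Fibnd} coincide with the paper's proof. (The paper cites the single-type analysis for uniformity where you invoke P\'olya's theorem; \eqref{Happ} even gives the explicit rate $CR^2b_1^{-n}$ on $[0,R]$.)

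For the fast block, the paper uses exactly the additive iteration of \eqref{ineq} that you abandoned, and your reason for abandoning it is a miscount. The forcing term is $u_0(m)\epsilon(m)$ with $u_0(m)\le x_0b_1^{m-n}$, not $O(1)\cdot\epsilon(m)$. Keeping this factor and bounding $\prod_{j=m+1}^{n-1}(b_1+u_0(j))\le C(R)\,b_1^{n-m}$ gives
\[
\delta(n)\le C(R)\sum_{m<n} b_1^{n-m}\, b_1^{m-n}\, b_{d_0+1}^{m}\, b_1^{-n}\le C(R)\Big(\frac{b_{d_0+1}}{b_1}\Big)^{n}\to 0,
\]
which is \eqref{delbnd} with $m=n$.

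Your multiplicative comparison is nonetheless a valid alternative. Its engine is the product structure of $\bar F$: all fast coordinates are multiplied by the same factor $1+v_1/(1+u(m))$. Hence $u_i(m)=\frac{x_i}{x_0}u_0(m)$ and $w_i(m)=\frac{x_i}{x_0}w_0(m)$ for $i\le d_0$. Everything then reduces to the scalar bound $u_0(m)\le f^{(m)}(x_0b_1^{-n})=w_0(m)$, which the paper uses at the start of the proof of Lemma~\ref{lem:3.2}, plus your per-step factor $1-v_1\epsilon(k)$.

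You should state this proportionality explicitly, because it is what yields the upper bound $u_i\le w_i$. A step-by-step comparison of the multipliers would require $u(k)\ge w(k)$ for the total masses. That can fail for $k$ near $n$, because the lag $w_0-u_0$ induced by the slow mass may exceed the slow mass itself.

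As for what each route buys: yours gives a one-sided, monotone comparison with the same rate and avoids the Gronwall-type product bookkeeping. The paper's route is more generic, using only \eqref{ineq} and \eqref{Fibnd}, and delivers the intermediate-time bound \eqref{delbnd} that is reused in Lemma~\ref{lem:3.2}. Your bounds give that as well, since $0\le w_i(m)-u_i(m)\le v_1w_i(m)\sum_{k<m}\epsilon(k)\le C(R)\,b_1^{m-2n}b_{d_0+1}^{m}$.
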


\begin{proof} 
Let $\bar y \in \Real_+^d$ be the vector with $y_i = x_i$ for $1\le i\le d_0$ and $y_i=0$ for $i>d_0$. Due to \eqref{Ffn},
$$
\bar F^{(n)} (\bar y/b_1^n)= \frac{f^{(n)} (y/b_1^n)}{y}\bar y \to \frac{H(y)}{y}\bar y = \frac  {H(x_0)}{x_0} (x_1,\ldots,x_{d_0},0,\ldots,0),\quad n\to\infty,
$$
where the convergence is uniform on compacts as shown in \cite[Section 3]{Fima17PCR}. It remains to check that 
\begin{equation}\label{S1}
 \sup_{0\le x\le R}\big\|\bar F^{(n)} (\bar y/b_1^n)-\bar F^{(n)} (\bar x/b_1^n)\big\|\to 0,\quad n\to\infty,\quad \forall R>0.
\end{equation}

Since $F^{(n)} _i(\bar y)=0$ for $i>d_0$ and all $n$, we have   
$$
\big\|\bar F^{(n)} (\bar y/b_1^n)-\bar F^{(n)} (\bar x/b_1^n)\big\|=  \delta_n(\bar x/b_1^n) +\sum_{i=d_0+1}^d  F^n_i(\bar x/b_1^n),
$$
where
$$\delta_n(\bar x):= \sum_{i=1}^{d_0} \big|  F^{(n)}_i(\bar y)-  F^{(n)}_i(\bar x)\big|.$$
Thus in view of \eqref{Fibnd}, to establish \eqref{S1}, it suffices to argue that 
\begin{equation}\label{dnn}
\sup_{0\le x\le R}\delta_n(\bar x/b_1^n) \to 0,\quad n\to\infty.
\end{equation}

To prove \eqref{dnn}, observe that summing the inequalities in \eqref{ineq} over $i\in \{1,\ldots,d_0\}$ yields
\begin{align*}
\delta_{k}(\bar x)  & \le\, b_1 \delta_{k-1 }(\bar x) + F^{(k-1)}(\bar y)\Big(\delta_{k-1 }(\bar x)+\sum_{i=d_0+1}^d F_i^{(k-1)}(\bar x)\Big)  \\
&\le
b_1 \delta_{k-1 }(\bar x) + b_1^k x\Big(\delta_{k-1 }(\bar x)+x \sum_{i=d_0+1}^d b_i^k  \Big),
\end{align*}
where the last inequality holds by \eqref{Fibnd}. Since $\delta_0(\bar x)=0$, by iterating this ineqiality $m$ times we obtain
$$
\delta_m(\bar x )\le  x^2  \sum_{k=1}^m \big(b_1+b_1^k x\big)^{m-k}b_1^k  \sum_{i=d_0+1}^d b_i^k ,
$$
and, consequently, for $m\le n$,
\begin{equation}\label{delbnd}
\sup_{0\le x\le R }\delta_m(\bar x/b_1^n ) \le 
x^2 \sum_{k=1}^m \big(b_1+b_1^{k-n} x\big)^{m-k}  b_1^{k-2n} \sum_{i=d_0+1}^d b_i^k \le
 C\phi(R) \ b_1^{m -2n}      b_{d_0+1}^m,
\end{equation}
where
\[\phi(r)=r^2\sup_{ j\ge 0}\big(1+b_1^{-j}r\big)^{j}\]
is finite for any $r>0$.
Setting $m=n$ in \eqref{delbnd}  yields \eqref{dnn}.   
\end{proof}

\begin{lem}\label{lem:3.20}
The infinite product in \eqref{Gilim} converges  and the limit $G_i:\Real_+\mapsto \Real_+$ is a continuous, positive and strictly decreasing function.
\end{lem}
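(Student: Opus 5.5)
The plan is to work with the logarithm of the product. Write each factor as
\[
a_{i,m}(r)=\frac{1+b_i^{-1}H(b_1^{-m}r)}{1+H(b_1^{-m}r)} = 1-\frac{(1-b_i^{-1})H(b_1^{-m}r)}{1+H(b_1^{-m}r)} .
\]
Since $b_i>1$ and $H\ge 0$ is increasing with $H(r)\le r$ by \eqref{Hr}, each factor lies in $(b_i^{-1},1]$. This gives
\[
0\le -\log a_{i,m}(r)\le C\,(1-b_i^{-1})H(b_1^{-m}r)\le C\, b_1^{-m} r,
\]
using $-\log(1-u)\le u/(1-u)$ with $u\le 1-b_i^{-1}<1$, so that $1-u\ge b_i^{-1}$.

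From this the properties follow in turn.

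\emph{Convergence and continuity.} The series $\sum_m -\log a_{i,m}(r)$ converges uniformly on $[0,R]$ for every $R>0$, being dominated by $C R\sum_m b_1^{-m}$. Each term is continuous because $H$ is continuous. Hence $\log G_i$ is a finite continuous function, and $G_i=\exp(\log G_i)$ is continuous and strictly positive on $\Real_+$.

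\emph{Strict monotonicity.} The map $h\mapsto (1+b_i^{-1}h)/(1+h)$ is strictly decreasing on $\Real_+$, since its derivative equals $(b_i^{-1}-1)/(1+h)^2<0$. Composed with the increasing map $r\mapsto H(b_1^{-m}r)$, every factor is nonincreasing in $r$. It remains to rule out a product that is merely nonincreasing, so I need one factor, say $m=1$, to be strictly decreasing. This holds once $H$ is strictly increasing, which is part of ``increasing continuous'' as established in \cite[Section~3]{Fima17PCR}. If only weak monotonicity were available there, I would argue it directly: $f$ is strictly increasing, so each $f^{(n)}(b_1^{-n}r)$ is strictly increasing in $r$, and \eqref{Hr} gives $H(r)>0$ for $r\in(0,1)$. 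A cleaner route is the functional equation $H(b_1 r)=f(H(r))$, which follows from \eqref{Hx}. Because $f$ is strictly increasing, this equation propagates strict increase of $H$ from a neighbourhood of $0$, where $H'(0)=1$, to all of $\Real_+$.

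With strict monotonicity of $H$ in hand, for $r<r'$ every factor satisfies $a_{i,m}(r')\le a_{i,m}(r)$, all factors are positive, and the $m=1$ inequality is strict. Therefore $G_i(r')<G_i(r)$.

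The main obstacle I expect is this strict-monotonicity point for $H$. Everything else is a routine bound on the infinite product via \eqref{Hr}.
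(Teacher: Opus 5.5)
Your proof is correct and is essentially the paper's argument. The paper likewise bounds the absolute logarithm of each factor by $H(b_1^{-m}r)\le b_1^{-m}r$ using \eqref{Hr} to get absolute, locally uniform convergence, continuity and positivity, and gets monotonicity by composing the decreasing map $h\mapsto(1+b_i^{-1}h)/(1+h)$ with the increasing $H$, relying on the cited monotonicity of $H$ without addressing strictness further.

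Your fallback justifications of strict increase of $H$ do not work as stated: strict increase of each $f^{(n)}(b_1^{-n}r)$ can be lost in the limit, and $H'(0)=1$ alone does not force strict increase on any neighbourhood of $0$. A self-contained fix is short: for $r<r'$, the chain rule together with $f'$ decreasing and $f^{(k)}(x)\le b_1^k x$ gives $f^{(n)}(r'/b_1^{n})-f^{(n)}(r/b_1^{n})\ge (r'-r)\prod_{j\ge 1}b_1^{-1}f'(b_1^{-j}r')$, where the product is a positive constant independent of $n$, and this passes to the limit to give $H(r')-H(r)>0$.
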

\begin{proof}
By the upper bound in \eqref{Hr},
$$
\sum_{m=1}^\infty \Big|\log \frac{1+\frac 1 {b_i}H\Big(\frac r{b_1^m}\Big) }{1+H\Big(\frac r{b_1^m}\Big) }\Big| \le 
\sum_{m=1}^\infty  H\Big(\frac r{b_1^m} \Big) \le r \frac {b_1}{b_1-1},\quad r\ge0,
$$
and therefore the infinite product in \eqref{Gilim} is absolutely convergent and defines a continuous positive function. 
Recall that  $H$ is an increasing function.  Since $b_i^{-1}<1$, the function 
$
r\mapsto (1+b_i^{-1}r)/(1+r)
$ 
is a decreasing function on $\Real_+$ and so is each term in \eqref{Gilim}. Consequently, $G_i$ decreases as well. 
\end{proof}

The next lemma establishes the relevant scaling limits for the last $d-d_0$ components.

\begin{lem}\label{lem:3.2}
 Define the diagonal matrix 
$$
M(n) :=\mathrm{diag}\big(b_1^{-n},\ldots, b_1^{-n}, m_{d_0+1}(n), ...,m_d(n)\big),
$$
and assume that
\[0<m_i(n)\le Cb_1^{-n},\quad i=d_0+1,\ldots, d.\]
Then, for any $i>d_0$ and $\bar x\in \Real_+^d$, 
\begin{equation}\label{Filim}
\frac 1{m_i(n)b_i^n} \, F^{(n)}_i(M(n) \bar x)\to  x_iG_i(x_0), \quad n\to \infty,
\end{equation}
where convergence is uniform on compact subsets of $\Real_+^d$.

\end{lem}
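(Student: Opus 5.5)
The plan is to write the $i$-th component of the iterates as a product and pass to the limit factor by factor. Put $\bar x^{(k)} := \bar F^{(k)}(M(n)\bar x)$ for $0\le k\le n$. For $i>d_0$, definition \eqref{barF} gives $F_i(\bar y)=y_i\,\frac{b_i+y}{1+y}$, and hence
\[
F^{(n)}_i(M(n)\bar x)=m_i(n)\,x_i\prod_{k=0}^{n-1}\frac{b_i+x^{(k)}}{1+x^{(k)}},
\qquad
\frac{F^{(n)}_i(M(n)\bar x)}{m_i(n)b_i^n}=x_i\prod_{k=0}^{n-1}\frac{1+b_i^{-1}x^{(k)}}{1+x^{(k)}}.
\]
Reindex by $m=n-k\in\{1,\ldots,n\}$. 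The claim \eqref{Filim} then amounts to showing that this finite product converges to the infinite product \eqref{Gilim} evaluated at $r=x_0$, uniformly for $\bar x$ in compacts. Uniformity in the prefactor $x_i$ is trivial on compacts.

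\textbf{Step 1: pointwise approximation of each factor.} I expect to show $x^{(n-m)}\approx H(b_1^{-m}x_0)$ for each fixed $m$. Note that $x^{(n-m)}=\sum_i F^{(n-m)}_i(M(n)\bar x)$. Write $M(n)\bar x = b_1^{-(n-m)}\,\bar z$ with $\bar z = b_1^{-m}(x_1,\dots,x_{d_0},\,b_1^n m_{d_0+1}(n)x_{d_0+1},\dots)$. By the hypothesis $m_i(n)\le Cb_1^{-n}$, the vector $\bar z$ ranges over a compact set. Lemma \ref{lem:3.1}, applied with $n-m\to\infty$ and uniformly on compacts, then gives
\[
x^{(n-m)}\to \frac{H(z_0)}{z_0}z_0=H(b_1^{-m}x_0),
\]
uniformly in $\bar x$ on compacts. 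The point is that only the first $d_0$ coordinates of $\bar z$ enter the limit, so the arbitrary scaling $m_i(n)$ of the slow components does not matter. Since the map $r\mapsto(1+b_i^{-1}r)/(1+r)$ is Lipschitz on $\mathbb R_+$, each fixed factor converges uniformly.

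\textbf{Step 2: tail control (dominated convergence for products).} This is the main obstacle, because the number of factors grows with $n$ and Lemma \ref{lem:3.1} is only useful when $n-m$ is large. I will bound the logarithms as in Lemma \ref{lem:3.20}:
\[
\Big|\log\frac{1+b_i^{-1}r}{1+r}\Big|\le r .
\]
So it suffices to find a summable bound on $x^{(n-m)}$ that is uniform in $n$. Using \eqref{Fibnd},
\[
x^{(n-m)}\le \sum_j b_j^{\,n-m}(M(n)\bar x)_j\le b_1^{n-m}\cdot C b_1^{-n}\|\bar x\| = C\|\bar x\|\,b_1^{-m}.
\]
The bound $b_j^{n-m}m_j(n)\le Cb_1^{-m}$ holds because $b_j\le b_1$. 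This gives a geometric majorant independent of $n$, and for $H(b_1^{-m}x_0)\le b_1^{-m}x_0$ the bound is \eqref{Hr}.

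\textbf{Step 3: combining.} Fix $\epsilon>0$ and choose $L$ so that the tails $\sum_{m>L}C\|\bar x\|b_1^{-m}$ are below $\epsilon$ on the compact set. This controls both the finite-$n$ product over $m>L$ and the infinite-product tail in $\log$-scale. For the first $L$ factors, apply Step 1 with $n\to\infty$. Exponentiating then gives uniform convergence of the product to $G_i(x_0)$, and therefore \eqref{Filim}.
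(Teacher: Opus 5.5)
Your proof is correct, and it takes a different route from the paper's.

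\textbf{The paper's route.} The paper does not work with the exact identity $F_i^{(n)}(\bar y)=y_i\prod_{k}(b_i+y^{(k)})/(1+y^{(k)})$ in terms of the true totals. Instead it uses $F_0^{(m)}(\bar x)\le f^{(m)}(x_0)$ to sandwich $F_i^{(n)}(\bar x)/x_i$. The sandwich consists of the one-dimensional product $\prod_{m}\bigl(1+\frac{v_i}{1+f^{(m)}(x_0)}\bigr)$ times one of two correction products. One correction involves the fast-block deficit $f^{(m)}(x_0)-F_0^{(m)}(\bar x)$. The other involves the slow-block mass $F^{(m)}_{d+1}(\bar x)$. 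The paper then proves the three product limits \eqref{lim-i}--\eqref{lim-iii} by summing explicit error bounds over all $n$ factors simultaneously:
\begin{itemize}
\item the estimate $|H(r)-f^{(k)}(r/b_1^k)|\le Cr^2b_1^{-k}$ to pass from $f^{(m)}(x_0/b_1^n)$ to $H(x_0/b_1^{n-m})$;
\item the quantitative bound \eqref{delbnd}, taken from inside the proof of Lemma~\ref{lem:3.1}, for the fast-block deficit;
\item the growth bound \eqref{Fibnd} for the slow block.
\end{itemize}

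\textbf{Your route.} You compare the true totals $x^{(n-m)}$ directly with $H(b_1^{-m}x_0)$, using only the qualitative, uniform-on-compacts statement of Lemma~\ref{lem:3.1} after rescaling by $b_1^{n-m}$. Your observation that $\bar z$ stays in a fixed compact set, and that the limit depends only on $z_0=b_1^{-m}x_0$, is exactly what makes this legitimate. You then handle the growing number of factors by a Tannery-type truncation, with the $n$-independent summable majorant $C\|\bar x\|b_1^{-m}$ obtained from \eqref{Fibnd}.

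\textbf{What each buys.} Your argument is shorter and more modular. It needs no inequalities for the individual factors, no detour through $f^{(m)}(x_0/b_1^n)$, and no estimate beyond the statement of Lemma~\ref{lem:3.1}, and it shows transparently where the factors of $G_i$ come from. The paper's version, being built from explicit geometric error sums, is quantitative. It yields a convergence rate of order $(b_{d_0+1}/b_1)^n+b_1^{-n}$, which your truncation argument does not provide.
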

 
\begin{proof}

First, notice that for $m\ge1$,
\begin{align*}
F_0^{(m)}(\bar x)= & F_0^{(m-1)}(\bar x) \Big(1+ \frac{v_1}{1+ F^{(m-1)}(\bar x)}\Big)\le F_0^{(m-1)}(\bar x) \Big(1+ \frac{v_1}{1+ F_0^{(m-1)}(\bar x)}\Big)\\
&= f\big(F_0^{(m-1)}(\bar x) \big)\le f^{(m)}(x_0),
\end{align*}
since $F_0^{(0)}(\bar x)= x_0$ and $f$ is increasing.

To prove \eqref{Filim} it suffices to check that
\begin{equation}\label{Bounds}
\prod_{m=0}^{n-1}\frac1 {1+F^{(m)}_{d+1}(\bar x)}\le \frac{F^{(n)}_i(\bar x) }{x_i \prod_{m=0}^{n-1} \left(1+\frac {v_i}{1+  f^{(m)}(x_0)}\right)}\le \prod_{m=0}^{n-1} \Big(1+ f^{(m)}(x_0)-F^{(m)}_0(\bar x)  \Big),
\end{equation}
where
\[F^{(m)}_{d+1}(\bar x)=\sum_{j=d_0+1} ^dF^{(m)}_j(\bar x)\]
and  
\begin{align}
\label{lim-i}
&
b_i^{-n}\prod_{m=0}^{n-1} \Big(1+\frac {v_i}{1+f^{(m)}(x_0/b_1^n)}\Big)\to G_i(x_0),
\\
\label{lim-ii}
&
\prod_{m=0}^{n-1} \Big(1+ f^{(m)}(x_0/b_1^n)-F_0^{(m)}(M_n \bar x) \Big)
\to 1,
\\
&
\label{lim-iii}
\prod_{m=0}^{n-1}\Big(1+F^{(m)}_{d+1}(M_n \bar x)\Big) \to 1,
\end{align}
as $n\to\infty$, uniformly on compacts.

\subsubsection*{Proof of \eqref{Bounds}}
By definition \eqref{barF}, for $i> d_0$,
\[
\begin{aligned}
F^{(n)}_i(\bar x) &=\,  x_i \prod_{m=0}^{n-1} \left(1+\frac {v_i}{1+F^{(m)}(\bar x)}\right)
\le x_i \prod_{m=0}^{n-1} \left(1+\frac {v_i}{1+ F^{(m)}_0(\bar x)}\right) \\
&\le 
x_i \prod_{m=0}^{n-1} \left(1+\frac {v_i}{1+  f^{(m)}(x_0)}\right)
\prod_{m=0}^{n-1} \Big(1+ f^{(m)}(x_0)-F^{(m)}_0(\bar x)  \Big)
\end{aligned}
\]
giving the upper bound in \eqref{Bounds}.
The lower bound is obtained similarly: 
\[
\begin{aligned}
F^{(n)}_i(\bar x) \ge\,  & x_i \prod_{m=0}^{n-1} \Big(1+\frac {v_i}{1+f^{(m)} (x_0)+F^{(m)}_{d+1}(\bar x)}\Big)  \\
&\ge
x_i \prod_{m=0}^{n-1} \Big(1+\frac {v_i}{1+f^{(m)}(x_0)}\Big)\prod_{m=0}^{n-1}\frac1 {1+F^{(m)}_{d+1}(\bar x)}.
\end{aligned}
\]

\subsubsection*{Proof of \eqref{lim-i}}

Write the left hand side of \eqref{lim-i} as
\begin{equation}
\label{showme}
\prod_{m=0}^{n-1} \frac 1{b_i}\Big(1+\frac {v_i}{1+f^{(m)}(x_0/b_1^{n})}\Big)= A_n(x_0)
\prod_{m=0}^{n-1} \frac 1 {b_i}\Big(1+\frac {v_i}{1+H(x_0/b_1^{n-m})}\Big),
\end{equation}
where 
\begin{equation}\label{Abnd}
\Big|A_n(x_0) - 1\Big| \le C\sum_{m=0}^{n-1} \Big|f^{(m)}(x_0/b_1^{n})-H(x_0/b_1^{n-m})\Big|.
\end{equation}
For any $k\in \mathbb N$ and $r\ge0$,  
\begin{equation}\label{Happ}
\begin{aligned}
| H(r)-f^{(k)}(r/b_1^k) | \le  & \sum_{j=k}^\infty |f^{(j+1)}(r/b_1^{j+1})-f^{(j)} (r/b_1^j)|   \\
&\stackrel \dagger \le
\sum_{j=k}^\infty b_1^j |f(r/b_1^{j+1})- r/b_1^j| =  \sum_{j=k}^\infty \frac{r^2v_1}{b_1^{j+2}+rb_1}\le  Cr^2 b_1^{-k},
\end{aligned}
\end{equation}
where $\dagger$ holds since $f$ is $b_1$-Lipschitz.
Substituting this estimate into \eqref{Abnd} yields 
$$
\sup_{\|x\|\le R}\Big|A_n(x_0) - 1\Big| \le  C R^2 b_1^{-n}\sum_{m=0}^{n-1}  b_1^{m-n}  \to 0, \quad \forall R>0,
$$
as $n\to\infty$. The limit in \eqref{lim-i} now follows from \eqref{showme}, since
$$
\prod_{m=0}^{n-1} \frac 1 {b_i}\Big(1+\frac {v_i}{1+H(x_0/b_1^{n-m})}\Big) =
\prod_{k=1}^{n}   \frac {1+b_i^{-1}H(x_0/b_1^{k})}{1+H(x_0/b_1^{k})}\to G_i(x_0),
$$
 as $n\to\infty$, uniformly on compacts (see Lemma \ref{lem:3.20}).

\subsubsection*{Proof of \eqref{lim-ii} and  \eqref{lim-iii}}
Note that due to identity \eqref{Ffn} and the bound in \eqref{delbnd}
$$
\sup_{0\le x\le R}\left|f^{(m)}(x_0/b_1^n)-F^{(m)}_0(\bar x/b_1^n)\right| \le  \sup_{x\le R} \delta_{m}(\bar x/b_1^n) \le C\phi(R) \ b_1^{m -2n}      b_{d_0+1}^m.
$$
The convergence in \eqref{lim-ii} follows since 
$$
\sup_{x\le R} \sum_{m=0}^{n-1} \left|f^{(m)}(x_0/b_1^n)-F^{(m)}_0(\bar x/b_1^n)\right|
\le 
C\phi(R)
b_1^{-2n}\sum_{m=0}^{n-1}(b_{d_0+1}b_1)^m.
$$
The convergence in \eqref{lim-iii} holds due to \eqref{Fibnd} since for any $R>0$, 
$$
\sup_{\|x\|\le R}\sum_{m=0}^{n-1}   F^{(m)}_{d+1}(M_n \bar x) \le CR (b_{d_0+1}/b_1)^n.
$$

\end{proof}

The last lemma proves the identity in \eqref{GH}.

\begin{lem}\label{lem:A5}
$$
rG_1(r)=H(r), \quad r\ge0.
$$
\end{lem}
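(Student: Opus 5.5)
The plan is to derive a functional equation for $H$ from its definition \eqref{Hx}, and then observe that it turns the infinite product \eqref{Gilim} for $i=1$ into a telescoping product. The case $r=0$ is trivial, since both sides vanish ($H(0)=0$ by \eqref{Hr}, and $G_1(0)=1$ is finite). So I will fix $r>0$.

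\emph{Step 1: functional equation.} I will show that
\[
H(b_1 s)=f(H(s)),\qquad s\ge 0.
\]
By \eqref{Hx}, $H(b_1 s)=\lim_{n\to\infty} f^{(n)}(b_1^{-(n-1)}s)=\lim_{n\to\infty} f\big(f^{(n-1)}(b_1^{-(n-1)}s)\big)$. The inner expression converges to $H(s)$, and $f$ is continuous, so the limit equals $f(H(s))$.

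\emph{Step 2: rewrite each factor.} By \eqref{fx} with $v_1=b_1-1$, $f(h)=h\,\frac{b_1+h}{1+h}$. Hence for $h=H(s)>0$,
\[
\frac{1+b_1^{-1}H(s)}{1+H(s)}=\frac{1}{b_1}\cdot\frac{b_1+H(s)}{1+H(s)}=\frac{f(H(s))}{b_1H(s)}=\frac{H(b_1 s)}{b_1 H(s)}.
\]
Positivity of $H(s)$ for $s>0$ follows because $H$ is increasing and $H(s)\ge s-s^2>0$ for small $s$, by \eqref{Hr}.

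\emph{Step 3: telescope.} I will apply Step 2 with $s=r b_1^{-m}$. The partial product of \eqref{Gilim} with $i=1$ over $m=1,\dots,N$ then equals
\[
\prod_{m=1}^N \frac{H(r b_1^{-m+1})}{b_1 H(r b_1^{-m})}=\frac{H(r)}{b_1^N H(r b_1^{-N})}.
\]
By \eqref{Hr}, $s-s^2\le H(s)\le s$, so $b_1^N H(rb_1^{-N})\to r$ as $N\to\infty$. The partial products therefore converge to $H(r)/r$, which identifies $G_1(r)=H(r)/r$; this matches the convergence already established in Lemma \ref{lem:3.20}.

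The only step needing care is Step 1, where the limit must pass through $f$. This is immediate from the continuity of $f$ and the existence of the limit \eqref{Hx}, so I expect no real obstacle.
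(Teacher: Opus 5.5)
Your proof is correct and follows essentially the same route as the paper: the functional equation $H(r)=f(H(r/b_1))$ turns each factor into $H(rb_1^{-m+1})/(b_1H(rb_1^{-m}))$, the product telescopes to $H(r)/(b_1^NH(rb_1^{-N}))$, and $b_1^NH(rb_1^{-N})\to r$ finishes. The only differences are that you treat $r=0$ and the positivity of $H$ on $(0,\infty)$ explicitly (the paper leaves both implicit), and you read off the last limit directly from \eqref{Hr} rather than quoting $H'(0)=1$, which is the same fact.
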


\begin{proof}
By definition \eqref{Hx} and continuity of $f$, 
$$
H(r)=\lim_{n\to\infty} f^{(n+1)}(b_1^{-n-1}r) = f\Big(\lim_{n\to\infty} f^{(n)}\big(b_1^{-n}(r/b_1)\big)\Big)= f(H(r/b_1)).
$$
Therefore, 
$$
H\big(r/b_1^{m-1}\big) = f(H(r/b_1^m)) = H(r/b_1^m)\frac{b_1+H(r/b_1^m)}{1+H(r/b_1^m)}.
$$
Substituting this into the definition of $G_1(x)$ we obtain: 
$$
G_1(r)=\lim_{n\to \infty}\prod_{m=1}^{n}\frac{1+\frac{1}{b_1}H(r/b_1^{m})}{1+H(r/b_1^{m})}=
\lim_{n\to \infty}\prod_{m=1}^{n}  \frac{H\big(r/b_1^{m-1}\big)}{b_1H(r/b_1^m)} = 
\lim_{n\to \infty} \frac{H(r)}{b_1^nH(r/b_1^n)}= \frac{H(r)}r,
$$
where the last equality holds since $H'(0)=1$, see \cite[Lemma 7]{BCK24}.
\end{proof}


\end{document}